\title[Renormalized Chern-Gauss-Bonnet formula]
{{\bf Renormalized Chern-Gauss-Bonnet formula for 
complete K\"{a}hler-Einstein metrics}}
\author{TAIJI MARUGAME}
\date{}
\address{Graduate School of Mathematical Sciences, The University of Tokyo, 3-8-1 Komaba, Meguro, Tokyo 153-8914, Japan}
\email{marugame@ms.u-tokyo.ac.jp}
\newcommand\C{\mathbb{C}}
\newcommand\R{\mathbb{R}}
\newcommand\Sp{\mathbb{S}}
\newcommand\J{\mathcal{J}}
\newcommand\scal{{\rm Scal}}
\renewcommand\a{\alpha}
\renewcommand\b{\beta}
\newcommand\g{\gamma}
\renewcommand\d{\delta}
\newcommand\e{\varepsilon}
\renewcommand\r{\rho}
\newcommand\U{\Upsilon}
\renewcommand\th{\theta}
\newcommand\Th{\Theta}
\newcommand\ol{\overline}
\newcommand\z{\zeta}
\newcommand\pa{\partial}
\newtheorem{mainthm}{Theorem}
\newtheorem{thm}{Theorem}[section]
\newtheorem{dfn}[thm]{Definition}
\newtheorem{prop}[thm]{Proposition}
\newtheorem{lem}[thm]{Lemma}
\newtheorem{cor}[thm]{Corollary}
\begin{document}
\maketitle
\begin{abstract}
We present a renormalized Gauss-Bonnet formula for a strictly pseudoconvex manifold with a complete K\"{a}hler metric given by a globally defined potential function near the boundary.  When the metric is asymptotically Einstein, the boundary contribution in the formula is explicitly written down in terms of the pseudo-hermitian geometry of the boundary and is shown to be a global CR invariant.  The CR invariant generalizes the Burns-Epstein invariant on 3-dimensional CR manifolds.
\end{abstract}

\section{Introduction}

In recent years there has been much work on the renormalization of curvature integrals for complete metrics. In conformal geometry, the renormalized volume plays an important role in the study of a conformally compact Einstein metric and its conformal infinity, and is motivated by the AdS/CFT correspondence in theoretical physics; see \cite{G2}. Several authors derive Gauss-Bonnet formulas incorporating the renormalized volume; see \cite{An}, 
\cite{CQY}, \cite{He}, \cite{S}. These works are based on the cut-off method with respect to the special defining function, and deep analysis of asymptotic behavior of the metric near the boundary. 
 
 In complex geometry, the corresponding setting is the complete K\"{a}hler-Einstein manifold $(X,g)$ and the CR structure on the boundary $M=\pa X$.  The construction of such metrics goes back to 1980 (\cite{Fe2},\cite{CY}) and the renormalized Gauss-Bonnet formula for strictly pseudoconvex domains $\C^{n+1}$ was derived by Burns and Epstein \cite{BE2} in 1990. 
In contrast to the conformal case, their method is much more algebraic: the Bochner tensor $\Th$, or the trace-free part of the curvature of $g$, turns out to be continuous up to the boundary and one can define renormalized Chern classes $c_k(\Th)$ as invariant polynomials of $\Th$. The top Chern form $c_{n+1}(\Th)$ can be integrated over $X$ and defines a characteristic number of $X$.  The renormalized Gauss-Bonnet formula takes the form
\begin{equation}\label{RGBF}
\int_X c_{n+1}(\Th)=\chi(X)+\mu(M),
\end{equation}
where $\mu(M)$ is a correction term determined by the CR geometry of the boundary.
In the $n=1$ case, they derived $\mu(M)$ with a rather direct computation:
\begin{equation}\label{mudim3}
\mu(M)=\frac{1}{4\pi^2}\int_{\pa X} \Bigl(|A|^2-\frac{1}{4}\scal^2\Bigr)\th\wedge d\th,
\end{equation}
where $|A|^2$ is the squared norm of the Tanaka-Webster (TW) torsion and $\scal$ is the TW scalar curvature with respect to $\th$, which is assumed to be pseudo-Einstein (see \S\ref{section-pe}).
The integral on the right-hand side can be generalized to abstract 3-dimensional CR manifold $M$ with trivial CR holomorphic tangent bundle (\cite{BE1}) and $\mu(M)$ is now called the Burns-Epstein invariant; see \cite{CL} and \cite{BH} for  further generalization of the invariant in real 3-dimensions.

In the case $n\ge2$, the renormalized Gauss-Bonnet formula is derived from  a Chern-Simons type transgression formula defined on the Cartan bundle over a CR manifold. The boundary term $\mu(M)$ is defined as a paring of the transgression form with a cycle, called the homological section, of the bundle.  Due to this topological procedure, it is not easy to relate $\mu(M)$ with the local CR or pseudo-hermitian geometry of $M$.  Also, the construction of the homological section is done by using the global coordinates of $\C^{n+1}$ and their result is confined to this setting.

We here derive a renormalized Gauss-Bonnet formula for a strictly pseudoconvex manifold $X$ with a complete Einstein metric of the form $g=\pa\ol{\pa}\log(-1/\r)$ near the boundary $M$, where $\r$ is a defining function of $M$. 
In the formula, $\mu(M)$ is given explicitly in terms of the TW curvature and torsion of the contact form $\th=(\sqrt{-1}/2)(\ol{\pa}\r-\pa\r)|_{TM}$.
In particular, in case $\dim_{\mathbb{R}}M=5$, we have
\begin{equation}\label{mudim5}
\mu(M) 
=\frac{1}{16\pi^3}\int_{M}\Bigl(\frac{1}{54}(\scal)^3-\frac{1}{12}|R|^2\scal+R_{\a\ol{\b}\g\ol{\d}}A^{\a\g}A^{\ol{\b}\ol{\d}}\Bigr)\th\wedge (d\th)^2,
\end{equation}
where $R$ and $A$ are respectively the TW curvature and torsion.
For higher dimensions, the integrand is given by a linear combination of complete contractions of tensor products of several 
$R$ and $A$.

We significantly simplify and generalize the derivation of $\mu(M)$ in \cite{BE2} by introducing a local frame of the holomorphic tangent bundle $T^{1,0}X$ which is compatible with the CR structures of the foliation defined by the level sets of $\rho$.  We can use the local frame to derive a transgression form for $c_{n+1}(\Th)$, which is independent of the choice of the frame and thus globally defined on $X$.
It then becomes possible to apply the classical method in Chern's proof of the Gauss-Bonnet theorem \cite{Ch} to prove the renormalized version \eqref{RGBF}.
This argument is irrelevant to the Einstein condition and our first main theorem can be formulated as follows:

\begin{mainthm}\label{first-mainthm}
Let $X$ be an $(n+1)$-dimensional compact complex manifold with strictly pseudoconvex boundary $M$. Assume that $X$ admits a complete hermitian metric $g$ which is of the form $g=\pa\ol{\pa}\log(-1/\r)$ near the boundary for a defining function $\r$ of $M$. Let $\Th$ be the Bochner tensor of $g$ and set $\th=(\sqrt{-1}/2)(\ol{\pa}\r-\pa\r)|_{TM}$. Then the renormalized Chern form for $g$ satisfies
\begin{equation}\label{rgbintro}
\int_X c_{n+1}(\Th)=\chi(X)+\mu(M,\rho).
\end{equation}
The boundary term is given by the integral
$$
\mu(M,\rho)=\int_M F(R,A,r)\,\th\wedge(d\th)^n,
$$
where $F(R,A,r)$ is an invariant polynomial in the components of the TW curvature $R$, TW torsion $A$ and the transverse curvature $r$ of $\rho$.\end{mainthm}

While the explicit form of $F(R,A,r)$ is not easy to write down, in the course of the proof, we obtain an expression 
$$
F(R,A,r)=\Pi(\th_i{}^j,\Theta),
$$
where $\Pi$ is a transgression form given explicitly as a polynomial in $\Th$ and the renormalized connection form $\th_i{}^j$ of $g$; see Theorem \ref{transgression}.  

 We now specialize Theorem \ref{first-mainthm} to the case for asymptotically Einstein metric, which is equivalent to imposing the approximate complex Monge-Amp\`{e}re equation for the defining function $\rho$.

\begin{mainthm}\label{second-mainthm} Let $X$ and $g$ be as in Theorem \ref{first-mainthm}.
Assume that $\r$ satisfies the approximate complex Monge-Amp\`{e}re equation.  Then $\mu(M,\r)$ is independent of the choice of such $\r$ and defines a CR invariant $\mu(M)$ of $M$. Moreover, it has the form
$$
\mu(M)=\int_M F(R,A)\,\th\wedge(d\th)^n,
$$
where $F(R,A)$ is an invariant polynomial in the components of the TW curvature $R$ and  torsion $A$.
In the case $n=1$ and $2$, $F(R,A)$ is respectively given by
\eqref{mudim3} and
\eqref{mudim5}.
\end{mainthm}

Note that the existence of a global defining function $\r$
satisfying the asymptotic Monge-Amp\`{e}re equation is equivalent to the existence of a pseudo-Einstein contact form.  Thus we may say that the CR invariant $\mu(M)$ is defined on pseudo-Einstein manifolds; see Theorem \ref{HPTthm}.  It is also known that the boundary of a strictly pseudoconvex domain in a Stein manifold admits a pseudo-Einstein contact form \cite{CC}; so Theorem 2 can be applied to such a domain.

While the formula of $\mu(M)$ for low dimensions is explicitly given, it is still not easy to evaluate the integral.  We give here two simple examples for which we can write down $\mu(M)$ with a help of symmetry. First, let $(L,h)\to Y$ be a hermitian line bundle over a compact 2-dimensional complex manifold. 
We assume that the unit disc bundle $X\subset L$ is strictly pseudoconvex, or equivalently, the curvature $\operatorname{curv}(h)$ of $h$ is negative definite.  If we  further assume that $-\operatorname{curv}(h)$ defines an K\"ahler-Einstein metric $\underline{g}$, then the contact from defined from $\rho=\log h(v,v)$ is pseudo-Einstein; so that the assumption of Theorem \ref{second-mainthm} is satisfied.  In this setting, we can write $\mu(M)$ for $M=\pa X$ as follows:
\begin{equation}
\label{muM-example}
 \mu(M)=\frac{\sigma}{36}\Bigl(\chi(Y)-\frac{1}{8\pi^2}\int_Y |{\rm Weyl}|^2vol_{\underline{g}}\Bigr), 
\end{equation}
 where $\sigma$ and {\rm Weyl} are respectively the scalar and the Weyl curvatures of $\underline{g}$ as a Riemannian metric.
The second example is the boundary of a Reinhardt domain $\Omega_r=\{(w^0, w^1, w^2)\in\C^3\ |\ \sum(\log|w^i|)^2<r^2\}$,   on which $O(3)\times\mathbb{T}^3$ acts transitively as local CR diffeomorphisms. We can take a pseudo-Einstein contact form 
which is invariant under this action, and the computation of $\mu(\pa\Omega_r)$ is reduced to evaluating $\Pi$ at a single point of $\pa\Omega_r$. The result of the computation is 
\begin{equation}\label{mu-Reinhardt}
\mu(\pa\Omega_r)=-\frac{20\pi}{27}\frac{1}{r^3}. 
\end{equation}
By Fefferman's theorem, if two strictly pseudoconvex domains are biholomorphic, then their boundaries are CR diffeomorphic. Since 
$\mu(\pa\Omega_r)$ is a CR invariant, it follows from (\ref{mu-Reinhardt}) that $\Omega_r$ and $\Omega_{r^\prime}$ are biholomorphic if and only if $r=r^\prime$.

Finally, let us mention another global CR invariant defined
from the $Q$-prime curvature on pseudo-Einstein manifolds (\cite{CaY}, \cite{Hi2}). $Q$-prime curvature is a local invariant of a pseudo-Einstein contact form; while it is not a local CR invariant, its integral is shown to be a global CR invariant, which is called the total $Q$-prime curvature
and denoted by $\overline{Q'}(M)$. For 3-dimensional manifolds, the total $Q$-prime curvature agrees with the Burns-Epstein invariant. However, for higher dimensions, 
$\mu(M)$ and  $\overline{Q'}(M)$ have different properties. For example, if $M$ is the boundary of a complete K\"ahler-Einstein manifold $(X,g)$, then
$\overline{Q'}(M)$ can be formulated as a renormalized volume of $(X,g)$.  The details will  appear in our joint paper with K. Hirachi and Y. Matsumoto \cite{HMM}.

This paper is organized as follows. In \S2, we review basic notions of the geometry of CR manifolds and complex manifolds with boundary. In \S3, we define the renormalized connection by following Burns and Epstein. Then we prove Theorem \ref{first-mainthm} by constructing a transgression of the renormalized Chern form.  In \S4, we introduce the approximate complex Monge-Amp\`{e}re equation for a defining function, and prove Theorem \ref{second-mainthm}. In the last section, \S5, is devoted to the proofs of the examples \eqref{muM-example} and \eqref{mu-Reinhardt}.

\medskip
\medskip

{\it Notations.} We use Einstein's summation convention and raise or lower indices by the Levi form. 

\begin{itemize}
\item
The lowercase Latin indices $i,j,k,l$  run from 0 to $n$.
\item
The lowercase Greek indices $\alpha,\beta,\gamma$ run from 1 to $n$. 
\end{itemize}

\medskip

{\bf Acknowledgement.} The author is grateful to his advisor Professor Kengo Hirachi for introducing him to this work and for various suggestions. He thanks Doctor Yoshihiko Matsumoto for helpful comments in the seminars. He also would like to express his gratitude to Professor Daniel Burns and Professor Paul Yang for having interest in this work and suggesting some related problems. 

\section{Pseudo-hermitian geometry}
\subsection{Pseudo-hermitian structures} 
Let $M$ be a $(2n+1)$-dimensional $C^\infty$ manifold. A {\it CR structure} on $M$ is a pair $(H,J)$, where
$H \subset TM$ is a subbundle of rank $2n$ called the {\it Levi distribution} and $J$ is an almost complex structure on $H$. The complexification of $H$ has the eigenspace decomposition with respect to $J$:
\begin{equation*}
\C \otimes H =T^{1,0}M\oplus T^{0,1}M,\ T^{0,1}M=\ol{T^{1,0}M}.
\end{equation*}
Here, $T^{1,0}M$ is the eigenspace corresponding to the eigenvalue $\sqrt{-1}$, and called ({\it CR}) {\it holomorphic tangent bundle} of $M$. We assume that $T^{1,0}M$ satisfies the following integrability condition: 
\begin{equation*}
[\Gamma (T^{1,0}M),\ \Gamma (T^{1,0}M)]\subset \Gamma (T^{1,0}M).
\end{equation*}

Suppose that $M$ is orientable. Then there exists a global nonvanishing real 1-form $\th$ which annihilates $H$.
 For a choice of such $\th$,\ the {\it Levi form} is defined by 
\begin{equation*} 
L_{\th}(V,\ol{W})=-\sqrt{-1}d\th (V\wedge \ol{W})\ \ \ \ \ V,W \in T^{1,0}M.
\end{equation*}
A CR structure is said to be {\it strictly pseudoconvex }if $\th$ can be taken so that $L_{\th}$ is positive definite. 
Such a $\th$ is called a {\it pseudo-hermitian structure} or a {\it contact form}. For another contact form $\widehat{\th}=e^{\U}\th\ \ (\U \in C^{\infty}(M))$, the Levi form transforms as $L_{\widehat{\th}}=e^{\U}L_{\th}$, so the conformal class of the Levi form is well-defined for a CR structure. In this point of view, one can regard CR structures as {\it ``pseudo-conformal structures"}. 
Fixing a contact form, we can define a canonical linear connection on $M$ called the {\it Tanaka-Webster $($TW$)$ connection} \cite{W}, \cite{T}, which is described as follows: Let $T$ be the {\it characteristic vector field}, which is the real vector field characterized by 
\begin{equation*}
\th (T)=1,\ T \lrcorner d\th =0.
\end{equation*}
A local frame $\{T, Z_{\a}, Z_{\ol{\a}}\}$ of $\C\otimes TM$ is called an {\it admissible frame} when $\{Z_{\a}\}$ is a local frame of 
$T^{1,0}M$ and $Z_{\ol{\a}}=\ol{Z_{\a}}$. The dual frame $\{\th, \th^\a, \th^{\ol{\a}}\}$ of an admissible frame is called an {\it admissible coframe}. In an admissible coframe, $d\th$ can be written as 
\begin{equation*}
d\th =\sqrt{-1} h_{\a\ol{\b}}\th^{\a}\wedge \th^{\ol{\b}}
\end{equation*}
and the TW connection $\nabla$ is defined by 
\begin{equation*}
\nabla T =0,\ \ \nabla Z_{\a}={\omega _{\a}}^{\b}\otimes Z_{\b},\ \ \nabla Z_{\ol{\a}}={\omega _{\ol{\a}}}^{\ol{\b}}\otimes Z_{\ol{\b}} \ \ ({\omega _{\ol{\a}}}^{\ol{\b}}=\ol{{\omega _{\a}}^{\b}})
\end{equation*}
with structure equations
\begin{gather*}
d\th^{\a}=\th^{\b}\wedge {\omega _{\b}}^{\a}+ {A^{\a}}_{\ol{\b}} \th \wedge \th^{\ol{\b}} \\
dh_{\a\ol{\b}}={\omega _{\a}}^{\g}h_{\g\ol{\b}}+h_{\a\ol{\g}}{\omega _{\ol{\b}}}^{\ol{\g}}.
\end{gather*}
The tensor ${A^{\a}}_{\ol{\b}}$ is called the {\it Tanaka-Webster $($TW$)$ torsion}, and satisfies $A_{\a\b}=A_{\b\a}$. We will use the symbol $T$ as the index of tensors corresponding to the direction $T$, and will denote components of covariant derivatives with indices preceded by a comma, e.g. $A_{\a\b,\g}=\nabla_{\g}A_{\a\b}$. We usually omit the comma for functions. For a function $f$, one can write as $df=f_{T}\th +\partial_{b}f+{\ol{\partial}}_{b}f$, where $\partial_{b}f:=f_{\a}\th^{\a}$ and $ {\ol{\partial}}_{b}f:=f_{\ol{\a}}\th^{\ol{\a}}$. The {\it sub-Laplacian} $\Delta_b$ is defined by $\Delta_b f=-{f_\g}^\g-{f_{\ol{\g}}}^{\ol{\g}}$.  A function $f$ is called a {\it CR holomorphic function} if ${\ol{\partial}}_{b}f=0$, and a real valued function is CR pluriharmonic if it is locally the real part of a CR holomorphic function.  

The curvature form of the TW connection ${\Omega _{\a}}^{\b}=d{\omega _{\a}}^{\b}-{\omega _{\a}}^{\g}\wedge {\omega _{\g}}^{\b}$ satisfies the structure equation
\begin{multline*}
{\Omega _{\a}}^{\b}={{R _{\a}}^{\b}}_{\g\ol{\d}}\ \th^{\g}\wedge \th^{\ol{\d}}+{A_{\a\g,}}^{\b}\th^{\g}\wedge \th
-{A^{\b}}_{\ol{\g},\a}\th^{\ol{\g}}\wedge \th \\
-\sqrt{-1}\ A_{\a\g}\th^{\g}\wedge \th^{\b}
+\sqrt{-1}\ h_{\a\ol{\g}}{A^{\b}}_{\ol{\d}}\th^{\ol{\g}}\wedge \th^{\ol{\d}}. \label{curvature}
\end{multline*}
We will use the Levi form $h_{\a\ol{\b}}$ to raise and lower the indices. The tensor $R _{\a\ol{\b}\g\ol{\d}}$, called the {\it Tanaka-Webster $($TW$)$ curvature}, has the following symmetry:
\begin{equation*}
R _{\a\ol{\b}\g\ol{\d}}=R _{\g\ol{\b}\a\ol{\d}},\ \ R _{\a\ol{\b}\g\ol{\d}}=R _{\a\ol{\d}\g\ol{\b}},\ \ 
R _{\a\ol{\b}\g\ol{\d}}=R _{\ol{\b}\a\ol{\d}\g}(:=\ol{R _{\b\ol{\a}\d\ol{\g}}}).
\end{equation*}
Contracting the curvature tensor, we define the Ricci and scalar curvatures:
\begin{equation*}
{\rm Ric}_{\g\ol{\d}}={{R _{\a}}^{\a}}_{\g\ol{\d}},\ \ \scal ={{\rm Ric}_{\g}}^{\g}.
\end{equation*}
\subsection{Strictly pseudoconvex manifolds} Let $X$ be a relatively compact domain with smooth boundary in a complex manifold $\widetilde{X}$. A defining function of $X$ is a smooth real function on $\widetilde{X}$ such that  
$X=\{\r<0\}$ and $d\r\neq 0$ at $\pa X$. The boundary $M=\pa X$ has a natural CR structure induced by the complex structure $J$ of $\widetilde{X}$; the Levi distribution $H$ is the maximal $J$-invariant subbundle of $TM$, and the restriction of $J$ defines an almost complex structure on $H$. The integrability condition for the CR structure follows from that of $J$. We say $X$ is a strictly pseudoconvex manifold if the Levi form of the contact form$(\sqrt{-1}/2)(\ol{\pa}\r-\pa\r)|_{TM}$ is positive definite.

Let us recall the Graham-Lee connection \cite{GL} for a defining function $\r$. When the boundary $M=\pa X$ is strictly pseudoconvex, there exists a unique (1,0)-vector field $\xi$ near $M$ satisfying 
\begin{equation}\label{xi}
\xi\r=1,\ \ \xi\lrcorner\,\pa\ol{\pa}\r=r\ol{\pa}\r
\end{equation} 
with a real function $r$. We call $r$ the {\it transverse curvature} of $\r$. Let $\{\th^{0},\th^{\a}\}$ be the dual frame of a local (1,0)-frame $\{W_{0}=\xi\ ,W_{\a}\}$ with $\{W_{\a}\}$ 
a frame for $\mathcal{H}:=$Ker$\partial\r$. Then $\th^{0}=\partial \r$ and one can write as
\begin{equation} \label{ambient}
\partial\ol{\partial}\r =h_{\a\ol{\b}} \th^{\a}\wedge\th^{\ol{\b}}+r\partial\r\wedge\ol{\partial}\r.
\end{equation}
For sufficiently small $\e>0$, $M^{-\e}:=\{\r=-\e \}$ is a strictly pseudoconvex CR manifold and the restriction of $\th:=
(\sqrt{-1}/2)(\ol{\pa}\r-\pa\r)$ gives a contact form. From (\ref{ambient}), the Levi form for this contact form is 
$h_{\a\ol{\b}}$ and the restriction of $\{\th, \th^{\a}, \th^{\ol{\a}}\}$ becomes an admissible coframe. 
We write 
\begin{equation*}
\xi =N-\frac{\sqrt{-1}}{2}T
\end{equation*}
with real vector fields $N$ and $T$. Then one can check that
\begin{equation*}
N\r=1,\ \ \th(N)=0,\ \ T\r=0,\ \ \th(T)=1,\ \ T\lrcorner d\th  |_{TM^{-\e}}=0
\end{equation*}
so $T$ is the characteristic vector field on each $M^{-\e}$.

Now the Graham-Lee connection is defined by the following proposition:
\begin{prop}[\cite{GL}]\label{GLcurv}
For a defining function $\r$, there exists a unique linear connection $\nabla$ on $X$ near $M$ with the properties:
\begin{itemize}
  \item[(a)] For any vector fields $Y$ and $Z$ that are tangent to some $M^{-\e}$,
   $\nabla_{Y}Z={\nabla^{-\e}}_{Y}Z$             where $\nabla^{-\e}$ is the TW connection on $M^{-\e}$.
  \item[(b)] $\nabla T=\nabla N=0\ and\ \nabla h_{\a\ol{\b}}=0$.
  \item[(c)] Let $\{W_{\a}\}$ be any frame for $\mathcal{H}$ and $\{\partial\r,\th^{\a}\}$ the $(1,0)$-coframe dual to 
             $\{\xi ,W_{\a}\}$. The connection one-forms ${\varphi _{\a}}^{\b}$, defined by $\nabla                     W_{\a}={\varphi_{\a}}^{\b}\otimes W_{\b}$, satisfy the following structure equation:
             \begin{equation}\label{GLstr}
             d\th^{\a}=\th^{\b}\wedge{\varphi_{\b}}^{\a}-\sqrt{-1}{A^{\a}}_{\ol{\g}}\partial\r\wedge\th^{\ol{\g}}-r^{\a}\partial\r\wedge\ol{\partial}\r+\frac{1}{2}rd\r\wedge\th^{\a}.
             \end{equation}             
\end{itemize}
The curvature form ${\Omega_{\a}}^{\b}=d{\varphi_{\a}}^{\b}-{\varphi_{\a}}^{\g}\wedge{\varphi_{\g}}^{\b}$ is given by
\begin{align*}
{\Omega_{\a}}^{\b}&={{R _{\a}}^{\b}}_{\g\ol{\d}}\ \th^{\g}\wedge \th^{\ol{\d}}+\sqrt{-1}{A_{\a\g,}}^{\b}\th^{\g}\wedge\ol{\partial}\r
+\sqrt{-1}{A^{\b}}_{\ol{\g},\a}\th^{\ol{\g}}\wedge\partial\r \\
&\quad -\sqrt{-1}\ A_{\a\g}\th^{\g}\wedge \th^{\b}
+\sqrt{-1}\ h_{\a\ol{\g}}{A^{\b}}_{\ol{\d}}\th^{\ol{\g}}\wedge \th^{\ol{\d}} \\
&\quad +d\r\wedge \Bigl( r_{\a}\th^{\b}-r^{\b}\th_{\a}+\frac{1}{2}{\d_{\a}}^{\b}r_{\g}\th^{\g}-\frac{1}{2}{\d_{\a}}^{\b}r_{\ol{\g}}\th^{\ol{\g}}\Bigr) \\
&\quad -\frac{1}{2}\Bigl( {r_{\a}}^{\b}+{r^{\b}}_{\a}+2A_{\a\g}A^{\g\b} \Bigr)\partial\r\wedge\ol{\partial}\r,\qquad
\end{align*}
where ${{R _{\a}}^{\b}}_{\g\ol{\d}}$ are the components of the TW curvature tensor, and ${\d_{\a}}^\b$ denotes 
the Kronecker delta.
\end{prop}
Note that the condition (a) follows from the others. One can see that $A_{\a\b}$ above coincides with the TW torsion on each $M^{-\e}$. We will use the index 0 for the covariant differentiation in the direction $\xi$, e.g., $A_{\a\b,0}=A_{\a\b,N}-(\sqrt{-1}/2)A_{\a\b,T}$.  

\section{Proof of Theorem \ref{first-mainthm}}
\subsection{Transgression and Gauss-Bonnet formula}Let $X$ be a strictly pseudoconvex manifold and $\r$ a defining function of $X$. By strict pseudoconvexity, a $(1,1)$-form $\pa\ol{\pa}\log(-1/\r)$ defines a K\"{a}hler metric on $X$ near the boundary $M=\pa X$. We extend this metric to a hermitian metric $g$ on $X$, and denote the connection and the curvature forms of $g$ by ${\psi_i}^{j}$ and ${\Psi_i}^{j}$ respectively. Burns and Epstein \cite{BE2} renormalized these forms by subtracting singularities. The renormalized connection form is defined by 
\begin{equation}\label{defth}
{\th_{i}}^{j}:={\psi_{i}}^{j}+{Y_{i}}^{j}, 
\end{equation}
where 
\begin{equation*}
{Y_{i}}^{j}:=\frac{1}{\r}\bigr( {\d_{i}}^{j}\r_{k}+{\d_{k}}^{j}\r_{i}\bigr)\th^{k}.
\end{equation*}
The renormalized curvature is defined by 
\begin{equation}\label{defW}
{W_{i}}^{j}:={\Psi_{i}}^{j}+{K_{i}}^{j},
\end{equation}
where
\begin{equation*}
{K_{i}}^{j}:=\big( {\d_{i}}^{j}g_{k\ol{l}}+{\d_{k}}^{j}g_{i\ol{l}}\bigr) \th^{k}\wedge\th^{\ol{l}}.
\end{equation*}
The curvature ${\Th_i}^j=d{\th_{i}}^{j}-{\th_{i}}^{k}\wedge{\th_{k}}^{j}$ is called the {\it Bochner tensor} of $g$. Burns and Epstein showed that ${\th_{i}}^{j}$ is continuous up to the boundary and defined the {\it characteristic number } by 
\begin{equation}\label{char-number}
 \int_X c_{n+1}(\Th)=\int_X \det\Bigl(\frac{\sqrt{-1}}{2\pi}{\Th_i}^j\Bigr). 
\end{equation}
We decompose (\ref{char-number}) into the Euler characteristic and a boundary integral by constructing a transgression form for $c_{n+1}(\Th)$. Let $\xi$ be the $(1,0)$-vector field near $M$ characterized by (\ref{xi}). We extend $\xi$ to $X$ so that it has finitely many non-degenerate zero points $\{p_{1},\dots,p_{m}\}$. Take a non-vanishing $(1,0)$-vector filed $V$ on $X\setminus\{p_{1},\dots,p_{m}\}$ which satisfies  $V=\xi$ near $M$ and $V=\xi/|\xi|_g$ near each $p_j$. We extend $\mathcal{H}={\rm Ker}\,\pa\r$ to $X\setminus\{p_{1},\dots,p_{m}\}$ so that $T^{1,0}(X\setminus\{p_{1},\dots,p_{m}\})=\C V\oplus \mathcal{H}$
 holds and we take a local $(1,0)$-frame as $\{W_{0}=V, W_{\a}\}$ according to the decomposition. We construct a differential form $\Pi$ on $\ol{X}\setminus\{p_{1},\dots,p_{m}\}$ such that $d\Pi=c_{n+1}(\Th)$. Then we have 
\begin{equation*}
\int_{X}c_{n+1}(\Th) =-\lim_{\e \to 0}\sum_{j=1}^{m}\int_{\pa B_{\e}(p_{j})}\Pi +\int_{M}\Pi, 
\end{equation*}
where $B_{\e}(p_{j})$ is the ball of radius $\e$ centered at $p_{j}$ with respect to $g$. The first term in the right-hand side is shown to be the sum of the indices of ${\rm Re}\, \xi$, which equals $\chi(X)$ by the 
Poincar\'{e}-Hopf theorem. The second term can be expressed in terms of the Graham-Lee connection. 

The transgression form $\Pi$ is given by the following proposition.
\begin{prop}\label{transgression}
There exists a differential form $\Pi$ on $\ol{X}\setminus\{p_{1},\dots,p_{m}\}$ that satisfies 
\begin{equation*}
c_{n+1}(\Th)=d\Pi.
\end{equation*}
Moreover, $\Pi$ is given by 
\begin{equation*}
\Pi=\frac{1}{n!}\Bigl(\frac{\sqrt{-1}}{2\pi}\Bigr)^{n+1}\sum_{k=0}^{n}\binom{n}{k}(\Phi_{k}^{(0)}-\Phi_{k}^{(1)}),
\end{equation*}
where
\begin{align*}
\Phi_{0}^{(0)}&= \sum_{\sigma ,\tau\in S_{n}} {\rm sgn}(\sigma\tau)\ {\th_{0}}^{0}{\th_{\sigma (1)}}^{0}{\th_{0}}^{\tau (1)}\cdots{\th_{\sigma (n)}}^{0}{\th_{0}}^{\tau (n)}, \\
\Phi_{k}^{(0)}&= \sum_{\sigma ,\tau\in S_{n}} {\rm sgn}(\sigma\tau)\ {\th_{0}}^{0}{\Th_{\sigma (1)}}^{\tau (1)}\cdots{\Th_{\sigma (k)}}^{\tau (k)}\\ \notag
&\qquad\qquad\cdot{\th_{\sigma (k+1)}}^{0}{\th_{0}}^{\tau (k+1)}\cdots{\th_{\sigma (n)}}^{0}{\th_{0}}^{\tau (n)}\quad(1\le k\le n), \\
\Phi_{0}^{(1)}&= \sum_{\sigma ,\tau\in S_{n}} {\rm sgn}(\sigma\tau)\ {\Th_{\sigma(1)}}^{0}{\th_{0}}^{\tau(1)}{\th_{\sigma (2)}}^{0}{\th_{0}}^{\tau (2)}\cdots{\th_{\sigma (n)}}^{0}{\th_{0}}^{\tau (n)}, \\
\Phi_{k}^{(1)}&= \sum_{\sigma ,\tau\in S_{n}} {\rm sgn}(\sigma\tau)\ {\Th_{\sigma(1)}}^{0}{\th_{0}}^{\tau(1)}{\Th_{\sigma (2)}}^{\tau (2)}\cdots{\Th_{\sigma (k+1)}}^{\tau (k+1)} \\ \notag
&\qquad\qquad\cdot{\th_{\sigma (k+2)}}^{0}{\th_{0}}^{\tau (k+2)}\cdots{\th_{\sigma (n)}}^{0}{\th_{0}}^{\tau (n)}\quad(1\le k\le n-1),\\
\Phi_{n}^{(1)}&=0.
\end{align*}
\end{prop}
In the above proposition, $S_k$ denotes the symmetric group of order $k$, and $S_{n+1}$ acts on $\{0,1,\dots ,n\}$ while $S_n$ acts on $\{1,2,\dots,n\}$. We have omitted the wedge product symbols in the formulas.

\begin{proof}
Let us define a linear connection $\widetilde{\nabla}$ on $T^{1,0}(\ol{X}\setminus\{p_{1},\dots,p_{m}\})$ by
\begin{align*}
\widetilde{\nabla}W_0 &=0, \\
\widetilde{\nabla}W_{\a}&={\th_{\a}}^{i}\otimes W_i,
\end{align*}
and set 
\begin{equation*}
{\widetilde{\nabla}}^{(t)}:=t\nabla +(1-t)\widetilde{\nabla},
\end{equation*}
where $\nabla$ is the connection defined by ${\th_{i}}^{j}$. Then the connection form of ${\widetilde{\nabla}}^{(t)}$ is 
\begin{equation*}
\begin{pmatrix}
  t{\th_{0}}^{0} & t{\th_{0}}^{\a} \\
  {\th_{\b}}^{0} & {\th_{\b}}^{\a} 
\end{pmatrix}
\end{equation*}
and the curvature $\Th_{t}$ is given by 
\begin{equation}
 \begin{array}{rcl}\label{curv}
{(\Th_{t})_{0}}^{0}&=&t{\Th_{0}}^{0}, \\
{(\Th_{t})_{0}}^{\a}&=&t{\Th_{0}}^{\a}+(t-t^2){\th_{0}}^{0}\wedge{\th_{0}}^{\a}, \\
{(\Th_{t})_{\b}}^{0}&=&{\Th_{\b}}^{0}+(1-t){\th_{\b}}^{0}\wedge{\th_{0}}^{0}, \\
{(\Th_{t})_{\b}}^{\a}&=&{\Th_{\b}}^{\a}+(1-t){\th_{\b}}^{0}\wedge{\th_{0}}^{\a}.
 \end{array}
\end{equation}
The $(n+1)$-st Chern form is expressed as 
\begin{equation*}
c_{n+1}(\Th )=\frac{1}{(n+1)!}\Bigl(\frac{\sqrt{-1}}{2\pi}\Bigr)^{n+1}P(\Th,\dots,\Th )
\end{equation*}
with the invariant polynomial
\begin{equation*}
P(A_{0},A_{1},\dots ,A_{n})=\sum_{\sigma ,\tau\in S_{n+1}} {\rm sgn}(\sigma\tau)\ {(A_{0})_{\sigma (0)}}^{\tau (0)}\cdots{(A_{n})_{\sigma (n)}}^{\tau (n)}.
\end{equation*}
Since $c_{n+1}(\Th_{0})=c_{n+1}(\widetilde{\Th})=0$, we have
\begin{align}\label{homotopy}
c_{n+1}(\Th) =c_{n+1}(\Th )-c_{n+1}(\widetilde{\Th})&=\int_{0}^{1}\frac{d}{dt}c_{n+1}(\Th_{t})dt \\ \notag
                                                    &=\frac{1}{n!}\Bigl(\frac{\sqrt{-1}}{2\pi}\Bigr)^{n+1}\int_{0}^{1}P(\dot{\Th}_t,\Th_{t},\dots ,\Th_{t})dt.
\end{align}
Here dots denote the differentiation with respect to $t$. We claim that 
\begin{equation}
P(\dot{\Th}_t,\Th_{t},\dots ,\Th_{t})=dP(\dot{\th}_t,\Th_{t},\dots ,\Th_{t})\label{d}.
\end{equation}
Since $P$ is an invariant polynomial, we can work with any frame. Fixing $t$, we take a frame such that ${({\th}_{t})_{i}}^{j}=0$ at a point. Then $d\dot{\th}_t=\dot{\Th}_t,\ d\Th_{t}=0$ at the point, so we get (\ref{d}). 
From (\ref{homotopy}) and (\ref{d}), we have $c_{n+1}(\Th)=d\Pi$ with 
\begin{equation}\label{Piint}
\Pi=\frac{1}{n!}\Bigl(\frac{\sqrt{-1}}{2\pi}\Bigr)^{n+1}\int_{0}^{1}P(\dot{\th}_t,\Th_{t},\dots ,\Th_{t})dt.
\end{equation}

Now we calculate 
\begin{equation}\label{var}
P(\dot{\th}_t,\Th_{t},\dots ,\Th_{t}) 
=\sum_{\sigma ,\tau\in S_{n+1}} {\rm sgn}(\sigma\tau)\ {(\dot{\th}_{t})_{\sigma (0)}}^{\tau (0)}{(\Th_{t})_{\sigma (1)}}^{\tau (1)}\cdots{(\Th_{t})_{\sigma (n)}}^{\tau (n)} 
\end{equation}
with respect to a local frame $\{W_0, W_{\a}\}$. In this frame, 
\begin{equation*}
\dot{\th}_{t}=\begin{pmatrix} {\th_{0}}^{0} & {\th_{0}}^{\a} \\
                                          0 & 0 
             \end{pmatrix}              
\end{equation*}
so ${(\dot{\th}_{t})_{\sigma (0)}}^{\tau (0)}=0$ unless $\sigma(0)=0$. We decompose the right-hand side of (\ref{var}) into
 two parts according to whether $\tau(0)=0$ or not:
\[
\sum_{\sigma ,\tau\in S_{n+1}} {\rm sgn}(\sigma\tau)\ {(\dot{\th}_{t})_{\sigma (0)}}^{\tau (0)}{(\Th_{t})_{\sigma (1)}}^{\tau (1)}\cdots{(\Th_{t})_{\sigma (n)}}^{\tau (n)}=\Pi_1+\Pi_2,
\]
where $\Pi_1$ and $\Pi_2$ are given by
\begin{align*}
\Pi_1&=\sum_{\sigma ,\tau\in S_{n}} {\rm sgn}(\sigma\tau)\ {\th_0}^0{(\Th_{t})_{\sigma (1)}}^{\tau (1)}\cdots{(\Th_{t})_{\sigma (n)}}^{\tau (n)} \\
&=\sum_{\sigma ,\tau\in S_{n}}{\rm sgn}(\sigma\tau)\ {\th_0}^0\Bigl({\Th_{\sigma (1)}}^{\tau (1)}+(1-t){\th_{\sigma(1)}}^{0}{\th_{0}}^{\tau(1)}\Bigr) \\
&\qquad\qquad\qquad\qquad\qquad\qquad \cdots\Bigl({\Th_{\sigma (n)}}^{\tau (n)}+(1-t){\th_{\sigma(n)}}^{0}{\th_{0}}^{\tau(n)}\Bigr) \\
&=\sum_{k=0}^{n}\binom{n}{k}(1-t)^{n-k}\Phi_{k}^{(0)}, \\
\Pi_2 &=-n\sum_{\sigma ,\tau\in S_{n}} {\rm sgn}(\sigma\tau)\ {\th_{0}}^{\tau (1)}{(\Th_{t})_{\sigma (1)}}^{0}{(\Th_{t})_{\sigma (2)}}^{\tau (2)}\cdots{(\Th_{t})_{\sigma (n)}}^{\tau (n)} \\
&=-n\sum_{\sigma ,\tau\in S_{n}}{\rm sgn}(\sigma\tau)\ {\th_{0}}^{\tau (1)}\Bigl({\Th_{\sigma (1)}}^{0}+(1-t){\th_{\sigma(1)}}^{0}{\th_{0}}^{0}\Bigr) \\
&\qquad\qquad\cdot\Bigl({\Th_{\sigma (2)}}^{\tau (2)}+(1-t){\th_{\sigma(2)}}^{0}{\th_{0}}^{\tau(2)}\Bigr)\cdots\Bigl({\Th_{\sigma (n)}}^{\tau (n)}+(1-t){\th_{\sigma(n)}}^{0}{\th_{0}}^{\tau(n)}\Bigr) \\
&=-\sum_{k=0}^{n-1}n\binom{n-1}{k}(1-t)^{n-k-1}\Phi_{k}^{(1)}+\sum_{k=0}^{n-1}n\binom{n-1}{k}(1-t)^{n-k}\Phi_{k}^{(0)}.
\end{align*} 
Substituting these forms into (\ref{Piint}) and computing the integration with respect to $t$, we obtain the formula of $\Pi$. 
\end{proof}

We prove that the integral of $\Pi$ over $\pa B_{\e}(p_{j})$ converges to the index of ${\rm Re}\,\xi$ at $p_j$ as 
$\e \to 0$. To relate the integration of the connection forms to the index, we use the following lemma.
\begin{lem}\label{lemma}
Let $(M, g)$ be an $(n+1)$-dimensional oriented Riemannian manifold and $\xi$ a vector field on a neighborhood $U$ of a point $p\in M$. Suppose that ${\rm Zero}(\xi)=\{p\}$ and $V$ is non-degenerate at $p$. For a linear connection $\nabla$ on $U$, we denote by  ${\omega_{i}}^{j}$ the connection 1-form with respect to a local oriented orthonormal frame $\{e_{0}=\xi/|\xi|_g, e_{1},\dots,e_{n}\}$ on $U\setminus\{p\}$. Then, 
\begin{equation}\label{integral}
\lim_{\e\to 0}\frac{1}{{\rm vol}(\Sp^{n})}\int_{\pa B_{\e}(p)}{\omega_{0}}^{1}\wedge\cdots\wedge{\omega_{0}}^{n}
={\rm Index}(p, \xi).
\end{equation}
 Here, ${\rm vol}(\Sp^{n})$ is the volume of the unit sphere in $\R^{n+1}$, and $B_{\e}(p)$ is the ball of radius $\e$ centered at $p$ with respect to $g$.
\end{lem}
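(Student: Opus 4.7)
My plan is to reduce the integral to a computation with the flat Euclidean connection in normal coordinates around $p$, where the wedge product of the connection forms becomes the pullback by the Gauss map $V/|V|$ of the standard volume form of $S^N$. The degree of this Gauss map is by definition the index of $V$ at $p$, which gives the result.

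\emph{Step 1 (Independence of the connection).} Let $\nabla'$ be any other linear connection on $U$ and set $S = \nabla - \nabla'$, a smooth $\mathrm{End}(TM)$-valued $1$-form. Then
\[
{\omega_0}^j - {{\omega'}_0}^j = e_j^{\,*}\bigl(S(\cdot)\,V\bigr)\big/|V|,
\]
which is bounded on $U\setminus\{p\}$ because $S(X)V$ vanishes at $p$ whereas $|V|=\Theta(|x|)$ by non-degeneracy. On the other hand, in coordinates centered at $p$ with $V=Lx+O(|x|^2)$, $\det L\neq 0$, a direct computation gives $de_0=O(|x|^{-1})$, hence each ${\omega_0}^j$ is of size $O(\e^{-1})$ on $S_\e^N$. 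Expanding the difference of $N$-fold wedge products as a telescoping sum of $N$ terms, each summand has at most $N-1$ singular factors and at least one bounded factor, giving pointwise size $O(\e^{-(N-1)})$; integration over $S_\e^N$ of volume $O(\e^N)$ yields $O(\e)\to 0$, so the limit in \eqref{integral} is independent of $\nabla$.

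\emph{Step 2 (Gauss-map evaluation).} Working in Riemann normal coordinates at $p$ with $g=\delta+O(|x|^2)$, take $\nabla=d$ (flat Euclidean). The $g$-orthonormal frame differs from its Euclidean Gram--Schmidt by $O(|x|^2)$, an error that by the same Step 1 estimate contributes $o(1)$ to the integral; up to this correction we may assume $\{e_0,\ldots,e_N\}$ is Euclidean-orthonormal. Then $\omega_0^j=\langle de_0,e_j\rangle_{\R^{N+1}}$, and because $|e_0|=1$ and $\{e_1,\ldots,e_N\}$ is an oriented orthonormal basis of $e_0^\perp=T_{e_0}S^N$ (the orientation being induced from the full frame via the outward normal $e_0$), we obtain
\[
\omega_0^1\wedge\cdots\wedge\omega_0^N = e_0^{\,*}\,\mathrm{vol}_{S^N}.
\]
Therefore
\[
\int_{S_\e^N}\omega_0^1\wedge\cdots\wedge\omega_0^N=\deg\bigl(V/|V|\colon S_\e^N\to S^N\bigr)\cdot\mathrm{vol}(S^N)+o(1),
\]
and the degree of this local Gauss map about a non-degenerate zero equals $\mathrm{Index}(p,V)$ by definition. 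Dividing by $\mathrm{vol}(S^N)$ and letting $\e\to 0$ gives \eqref{integral}.

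\emph{Main obstacle.} The essential point is the pair of bounds ${\omega_0}^j=O(\e^{-1})$ (upper) and ${\omega_0}^j-{{\omega'}_0}^j=O(1)$ (for differences), both of which hinge on the non-degeneracy $\det L\neq 0$ and the precise interplay between the singularity of $e_0=V/|V|$ and the tensorial nature of connection differences. Once these estimates are verified, the frame- and connection-dependence issues disappear in the $\e\to 0$ limit, and the remaining Euclidean computation is the classical identification used in Chern's proof of the Gauss--Bonnet theorem.
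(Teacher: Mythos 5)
Your proposal is correct and follows essentially the same route as the paper: first show the limit is independent of the connection by combining the $O(\e^{-1})$ bound on each ${\omega_0}^j$ (coming from non-degeneracy of $V$) with the boundedness of the tensorial difference of two connections, so that cross terms integrate to $O(\e)$; then evaluate the flat-connection case by identifying ${\omega_0}^1\wedge\cdots\wedge{\omega_0}^N$ with the pullback of the volume form of $S^N$ under the Gauss map $e_0$, whose degree is the index. The paper does the flat computation with an explicit $SO(N+1)$ frame identity rather than your normal-coordinate reduction, but this is only a cosmetic difference.
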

This is a standard fact of the index of a vector field, but we include a proof as we use the estimate in the proof to study the integral of $\Pi$.
\begin{proof}Take normal coordinates $(x^0,\dots,x^n)$ about $p$. Then, in the coordinates, we can identify ${\pa B_{\e}(p)}$ with the sphere $\Sp^n_\e$ in $\R^{n+1}$. 

First we prove (\ref{integral}) in the case where $\nabla$ is the trivial connection $\widetilde{\nabla}$ with respect to some oriented orthonormal frame $(u_0,\dots,u_n)$ for $g$. Write $e_0=F^{i}(x)u_{i}$, and define $F_{\e}:\Sp^n_{\e}\longrightarrow \Sp^n_{\e}$ by $F_{\e}(x)=(\e F^0(x),\dots,\e F^n(x))$. Then
\begin{align*}
{\rm Index}(p, \xi)&=\frac{1}{{\rm vol}(\Sp^n_{\e})}\int_{\Sp^n_{\e}}F_{\e}^{\ast}vol_{\Sp^n_{\e}} \\
                &=\frac{1}{{\rm vol}(\Sp^n)}\int_{\Sp_{\e}^n}\sum_{i=0}^{n}(-1)^iF^idF^0\wedge\cdots\wedge\widehat{dF^i}\wedge\cdots\wedge dF^n.
\end{align*}
Let ${{\widetilde{\omega}}_i}^{\ j}$ denote the connection form of $\widetilde{\nabla}$ with respect to $(e_0,\dots,e_n)$.
If we write $e_j={a_j}^l(x)u_l$ with $SO(n+1)$-valued function $({a_j}^l)$, then ${a_0}^l=F^l$ and
\begin{equation*}
\widetilde{\nabla}e_0=dF^l\otimes u_l=\sum_{k}\Bigl(\sum_{l}dF^l\cdot{a_k}^l\Bigr)\otimes e_k
\end{equation*}
which implies 
\begin{equation*}
({{\widetilde{\omega}}_0}^{\ 0},{{\widetilde{\omega}}_0}^{\ 1},\dots,{{\widetilde{\omega}}_0}^{\ n})=(dF^0,dF^1,\dots, dF^n)
\begin{pmatrix}
F^0 & {a_1}^0 & \cdots & {a_n}^0 \\
F^1 & {a_1}^1 & \cdots & {a_n}^1 \\
\vdots & \vdots & \ddots & \vdots \\
F^n & {a_1}^n & \cdots & {a_n}^n
\end{pmatrix}.
\end{equation*}
Since the matrix in the right-hand side is in $SO(n+1)$,
\[
{{\widetilde{\omega}}_0}^{\ 1}\wedge\cdots\wedge{{\widetilde{\omega}}_0}^{\ n}=\sum_{i=0}^{n}(-1)^iF^idF^0\wedge\cdots\wedge\widehat{dF^i}\wedge\cdots\wedge dF^n.
\]
Therefore the integration in (\ref{integral}) equals the index before we take limit. 

Now we consider general cases. For a general $\nabla$, we can write as 
\begin{equation*}
\nabla=\widetilde{\nabla}+A,\ \ A\in\Omega^1({\rm End}\ TU).
\end{equation*}
Define $\d_{\e}:\Sp^n\longrightarrow \Sp_{\e}^n$ by $\d_{\e}(x)=\e x$ and set 
$\Omega_{\e}=\d_{\e}^{\ast}({\omega_{0}}^{1}\wedge\cdots\wedge{\omega_{0}}^{N}\big|_{T\Sp_{\e}^n})$. Then
\begin{equation*}
\frac{1}{{\rm vol}(\Sp^{n})}\int_{\pa B_{\e}(p)}{\omega_{0}}^{1}\wedge\cdots\wedge{\omega_{0}}^{n}
=\frac{1}{{\rm vol}(\Sp^{n})}\int_{\Sp^n}\Omega_{\e}.
\end{equation*}
We claim that 
\begin{equation}\label{est1}
\d_{\e}^{\ast}({{\widetilde{\omega}}_0}^{\ k}\big|_{T\Sp_{\e}^n})=O(1)
\end{equation}
and
\begin{equation}\label{est2}
\d_{\e}^{\ast}({A_i}^{j}\big|_{T\Sp_{\e}^n})=O(\e)
\end{equation}
in the frame $(e_0,\dots,e_{n})$. First, ${{\widetilde{\omega}}_0}^{\ k}=\textstyle\sum_{l}dF^l\cdot{a_k}^l$ and
\begin{align*}
\d_{\e}^{\ast}dF^l&=\d_{\e}^{\ast}d\Bigl(\frac{\xi^l}{|\xi|_{g}}\Bigr) \\
                      &=\d_{\e}^{\ast}\Bigl(\frac{d\xi^l}{|\xi|_g}-\frac{d(|\xi|_g^2)}{2|\xi|_g^3}\Bigr). 
\end{align*}
Since $p$ is a non-degenerate zero point, we have 
\[ \d_{\e}^{\ast}\xi^l=O(\e),\ \ \ \d_{\e}^{\ast}(1/|\xi|_g)=O(\e^{-1}),\ \ \ \d_{\e}^{\ast}d\xi^l=O(\e), \ \ \ 
\d_{\e}^{\ast}d(|\xi|_g^2)=O(\e^2). \]
Therefore, ${\d_{\e}}^{\ast}dF^l=O(1)$ so we obtain (\ref{est1}). The second estimate (\ref{est2}) is immediate since $A$ is a tensor valued 1-form. From these estimates, it follows that 
\begin{align*}
\Omega_{\e}&=\d_{\e}^{\ast}\Bigl(({\widetilde{\omega}_{0}}^{\ 1}+{A_0}^1)\wedge\cdots\wedge{(\widetilde{\omega}_{0}}^{n}+{A_0}^n)\Bigr)\big|_{T\Sp_{\e}^n} \\
&=\d_{\e}^{\ast}\Bigl({\widetilde{\omega}_{0}}^{\ 1}\wedge\cdots\wedge{\widetilde{\omega}_{0}}^{\ n}\Bigr)\big|_{T\Sp_{\e}^n}
+O(\e).
\end{align*} 
Consequently, 
\begin{align*}
\lim_{\e\to 0}\frac{1}{{\rm vol}(\Sp^{n})}\int_{\Sp^n}\Omega_{\e}&= \lim_{\e\to 0}\frac{1}{{\rm vol}(\Sp^{n})}\int_{\Sp^n}\d_{\e}^{\ast}\bigl({\widetilde{\omega}_{0}}^{\ 1}\wedge\cdots\wedge{\widetilde{\omega}_{0}}^{\ n}\bigr)\big|_{T\Sp_{\e}^n} \\
&=\lim_{\e\to 0}\frac{1}{{\rm vol}(\Sp^{n})}\int_{\Sp_{\e}^n}{\widetilde{\omega}_{0}}^{\ 1}\wedge\cdots\wedge{\widetilde{\omega}_{0}}^{\ n} \\
&={\rm Index}(p, \xi),
\end{align*} 
which completes the proof of the lemma.
\end{proof}
As the corollary to the proof above, we can show the complex version of this lemma:
\begin{cor}
Let $(X, g)$ be an $(n+1)$-dimensional hermitian manifold and $\xi$ a $(1,0)$-vector field on a neighborhood $U$ of a point $p\in X$. Suppose that ${\rm Zero}(\xi)=\{p\}$ and $V$ is non-degenerate at $p$. For a linear connection $\nabla$ on $T^{1,0}U$, we denote 
by  ${\omega_{i}}^{j}$ the connection 1-form with respect to a local orthonormal $(1,0)$-frame $\{e_{0}=\xi/|\xi|_g,e_{1},\dots,e_{n}\}$ on $U\setminus\{p\}$. Then
\begin{align*}
&\lim_{\e\to 0}\frac{1}{{\rm vol}(\Sp^{2n+1})}\int_{\pa B_{\e}(p)}\frac{(\sqrt{-1})^{n-1}}{2^n}\,{\omega_{0}}^{0}\wedge{\omega_{1}}^{0}\wedge{\omega_{0}}^{1}\wedge\cdots\wedge{\omega_{n}}^{0}\wedge{\omega_{0}}^{n} \\
&={\rm Index}(p,{\rm Re}\, \xi). 
\end{align*}
\end{cor}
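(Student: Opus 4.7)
The plan is to reduce the corollary to Lemma \ref{lemma} by passing to the underlying $(2n+2)$-dimensional real Riemannian manifold $(X,g_\R)$ and applying the real lemma to the associated real vector field $V_\R:=V+\overline{V}$. Since $V_\R$ has the same zero set $\{p\}$ as $V$ and its real linearization at $p$ is invertible precisely when the complex linearization of $V$ is, we have ${\rm Index}(p,V_\R)={\rm Index}(p,V)$ by definition of the complex index.

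Next, I would build a real orthonormal frame adapted to $V_\R$. Writing $e_{k}=\frac{1}{\sqrt{2}}(\widetilde{u}_{k}-\sqrt{-1}\,\widetilde{v}_{k})$ with $\widetilde{v}_{k}=J\widetilde{u}_{k}$ produces a positively oriented real orthonormal frame $(\widetilde{u}_{0},\widetilde{v}_{0},\ldots,\widetilde{u}_{n},\widetilde{v}_{n})$ for $(TU,g_\R)$, and the condition $e_{0}=V/|V|$ forces $\widetilde{u}_{0}=V_\R/|V_\R|_\R$. The complex linear connection $\nabla$ on $T^{1,0}U$ induces a real linear connection $\widetilde{\nabla}$ on $TU$ via the $\R$-linear isomorphism $e_{k}\mapsto\widetilde{u}_{k}$, $\sqrt{-1}\,e_{k}\mapsto\widetilde{v}_{k}$. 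A direct computation shows that its connection matrix in the real frame is assembled from $2\times 2$ blocks whose diagonal entries are ${\rm Re}\,{\omega_{k}}^{l}$ and whose off-diagonal entries are $\pm{\rm Im}\,{\omega_{k}}^{l}$. Setting $a_{l}:={\rm Re}\,{\omega_{0}}^{l}$ and $b_{l}:={\rm Im}\,{\omega_{0}}^{l}$, one reads off
\[
\widetilde{\omega}_{0}^{\ 1}\wedge\widetilde{\omega}_{0}^{\ 2}\wedge\cdots\wedge\widetilde{\omega}_{0}^{\ 2n+1}=b_{0}\wedge a_{1}\wedge b_{1}\wedge\cdots\wedge a_{n}\wedge b_{n},
\]
and Lemma \ref{lemma} applied to $\widetilde{\nabla}$ and $V_\R$ identifies the normalized limit of the integral of this form over $S_{\varepsilon}^{2n+1}$ with ${\rm Index}(p,V)$.

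The final step is an algebraic identification of the complex $(2n+1)$-form in the statement with this real one. I would use the hermitian-compatibility relation ${\omega_{i}}^{j}=-\overline{{\omega_{j}}^{i}}$, valid in any orthonormal frame for a metric-compatible connection, giving ${\omega_{0}}^{0}=\sqrt{-1}\,b_{0}$ and ${\omega_{k}}^{0}=-a_{k}+\sqrt{-1}\,b_{k}$ for $k\geq 1$. Each factor $(-a_{k}+\sqrt{-1}\,b_{k})\wedge(a_{k}+\sqrt{-1}\,b_{k})$ collapses to $-2\sqrt{-1}\,a_{k}\wedge b_{k}$, yielding
\[
{\omega_{0}}^{0}\wedge{\omega_{1}}^{0}\wedge{\omega_{0}}^{1}\wedge\cdots\wedge{\omega_{n}}^{0}\wedge{\omega_{0}}^{n}=\sqrt{-1}\cdot(-2\sqrt{-1})^{n}\cdot b_{0}\wedge a_{1}\wedge b_{1}\wedge\cdots\wedge a_{n}\wedge b_{n},
\]
and the prefactor $(\sqrt{-1})^{n-1}/2^{n}$ in the statement is exactly what converts $\sqrt{-1}\cdot(-2\sqrt{-1})^{n}$ into $1$. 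The main obstacle is this sign-and-power-of-$\sqrt{-1}$ bookkeeping, together with checking that the orientation of the real frame matches the complex orientation; once these routine pieces are verified, the corollary follows immediately from Lemma \ref{lemma}.
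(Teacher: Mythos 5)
Your overall route is the same as the paper's: realify, apply Lemma \ref{lemma} to the underlying real frame and real vector field, and then match the complex $(2n+1)$-form in the statement with the real form $b_0\wedge a_1\wedge b_1\wedge\cdots\wedge a_n\wedge b_n$ by the identity ${\omega_i}^j=-\ol{{\omega_j}^i}$. Your sign bookkeeping is correct: $a_k\wedge b_k=\tfrac{\sqrt{-1}}{2}\,{\omega_0}^k\wedge\ol{{\omega_0}^k}$ and the prefactor $(\sqrt{-1})^{n-1}/2^n$ does exactly cancel $\sqrt{-1}\cdot(-2\sqrt{-1})^n$.

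There is, however, one genuine gap. The corollary is stated for an \emph{arbitrary} linear connection on $T^{1,0}U$, but the relation ${\omega_i}^j=-\ol{{\omega_j}^i}$, which you yourself flag as ``valid \dots for a metric-compatible connection,'' is the only bridge in your argument between the row-zero entries ${\omega_0}^k$ (which are all that enter the realified form $b_0\wedge a_1\wedge\cdots\wedge b_n$, since $\widetilde{\nabla}\widetilde{u}_0$ is determined by $\nabla e_0={\omega_0}^j\otimes e_j$ alone) and the column-zero entries ${\omega_k}^0$ appearing in the integrand of the corollary. For a non-metric connection these are independent data, so the pointwise identification of the two $(2n+1)$-forms simply fails, and your proof as written only establishes the statement under the additional hypothesis $\nabla g=0$. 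The missing step is to first show that the left-hand side of (\ref{integral2}) does not depend on the choice of $\nabla$; this follows from the perturbation argument in the second half of the proof of Lemma \ref{lemma} (writing $\nabla=\widetilde{\nabla}+A$ and checking ${\d_{\e}}^{\ast}({A_i}^j|_{S_\e^{2n+1}})=O(\e)$ while the trivial-connection forms pull back as $O(1)$, so only the leading term survives the limit). The paper opens its proof with precisely this reduction and only then assumes $\nabla g=0$; once you insert that sentence, your argument is complete and coincides with the paper's.
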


Now consider the integral of the transgression form $\Pi$ over $\pa B_{\e}(p_j)$. As in the proof of Lemma \ref{lemma}, we 
can write as 
\[ \int_{\pa B_{\e}(p_j)}\Pi=\int_{\Sp^{2n+1}} \d_\e^{\ast}\Pi \]
in normal coordinates about $p_j$, and we can estimate as 
\[ \d_\e^{\ast}{\th_{i}}^{j}=O(1),\ \ \  \d_\e^{\ast}{{\Th_i}}^{j}=O(\e^2) \]
since ${\Th_{i}}^{j}$ is a tensor valued 2-form. As a result, we see that the terms in $\Pi$ which involves the curvature form do not contribute to the limit. Therefore, we have 
\begin{align*}
\ -\lim_{\e \to 0}\sum_{j=1}^{m}\int_{\partial B_{\e}(p_{j})}\Pi 
&=-\lim_{\e \to 0}\sum_{j=1}^{m}\int_{\partial B_{\e}(p_{j})}\Bigl(\frac{\sqrt{-1}}{2\pi}\Bigr)^{n+1}n!\ {\th_0}^0{\th_1}^0{\th_0}^1\cdots{\th_n}^0{\th_0}^n \\
&=-\Bigl(\frac{\sqrt{-1}}{2\pi}\Bigr)^{n+1}n!\ {\rm vol}(\Sp^{2n+1})\frac{2^n}{(\sqrt{-1})^{n-1}}\sum_{j=1}^{m}{\rm Index}(p_j,{\rm Re}\, \xi) \\
&=\sum_{j=1}^{m}{\rm Index}(p_j,{\rm Re}\, \xi). 
\end{align*}
The sum of the index of ${\rm Re}\, \xi$ gives the Euler characteristic of $X$ by the Poincar\'{e}-Hopf theorem (see e.g. \cite{M}):
\begin{thm}
Let $X$ be a compact complex manifold with boundary and $\xi$ a $(1,0)$-vector field on $X$ which has finitely many non-degenerate zero points $\{p_{1},\dots,p_{m}\}$. Suppose that ${\rm Re}\, \xi$ is pointing in the outer direction at the boundary. Then 
\begin{equation*}
\sum_{j=1}^{m}{\rm Index}(p_{j}, {\rm Re}\, \xi)=\chi(X).
\end{equation*} 
\end{thm}
Thus we have the Gauss-Bonnet formula 
\begin{equation}\label{decom}
\int_X c_{n+1}(\Th)=\chi(X)+\int_M \Pi. 
\end{equation}
\subsection{Boundary integral}Finally, we shall express the boundary term in (\ref{decom}) in terms of the pseudo-hermitian structure and the transverse curvature. 
\begin{prop}\label{connection}
With respect to an admissible coframe $\{\th^{0}=\partial\r,\th^{\a}\}$, the renormalized connection ${\th_{i}}^{j}$ is  expressed near $M$ with the Graham-Lee connection ${\varphi_{\a}}^{\b}$ as follows: 
\begin{align*} 
{\th_{\b}}^{\a}&={\varphi_{\b}}^{\a}+\sqrt{-1}r\vartheta{\d_{\b}}^{\a},  \\
{\th_{0}}^{\a}&=r\th^{\a}-\sqrt{-1}{A^{\a}}_{\ol{\g}}\th^{\ol{\g}}-r^{\a}\ol{\partial}\r \\
{\th_{\b}}^{0}&=-h_{\b\ol{\g}}\th^{\ol{\g}}+\frac{-\r r_{\b}}{1-r\r}\partial\r +\frac{\sqrt{-1}\r}{1-r\r}A_{\b\g}\th^{\g}, \\
{\th_{0}}^{0}&=-r\ol{\partial}\r +\frac{-\r r^{2}}{1-r\r}\partial\r +\frac{-\r}{1-r\r}\pa r, 
\end{align*} 
where $\vartheta=(\sqrt{-1}/2)(\ol{\pa}\r-\pa\r)$. The Bochnor tensor ${\Th_{i}}^{j}$ and the renormalized curvature ${W_{i}}^{j}$ are related as 
\begin{equation}\label{WTh}
{W_{i}}^{j}={\Th_{i}}^{j}-u_{i}\wedge\th^{j},
\end{equation}
where
\begin{align*}
u_{0}&=\frac{r^{2}}{1-r\r}\partial\r+\frac{1}{1-r\r}\partial r, \\
u_{\b}&=-\frac{\sqrt{-1}}{1-r\r}A_{\b\g}\th^{\g}+\frac{r_{\b}}{1-r\r}\partial\r .
\end{align*}
\end{prop}

\begin{proof}
Write ${\th _{i}}^{j}={{\th _{i}}^{j}}_{l}\ \th^{l}+ {{\th _{i}}^{j}}_{\ol{l}}\ \th^{\ol{l}}$. We compare the structure equations of both connections. Since $g$ is K\"{a}hler near $M$, ${\psi_{i}}^{j}$ satisfies
\begin{gather}\label{psistr}
d\th^{i}=\th^{j}\wedge{\psi_{j}}^{i} \\ \label{psimet}
{\psi _{i}}^{k}g_{k\ol{j}}+g_{i\ol{k}}{\psi _{\ol{j}}}^{\ol{k}}=dg_{i\ol{j}}
\end{gather}
with
\begin{equation}
(g_{i\ol{j}})=\begin{pmatrix}
                (1-r\r)/\r^{2} & 0 \\
                          0     &-h_{\a\ol{\b}}/\r 
             \end{pmatrix}.
\end{equation}
From (\ref{defth}) and (\ref{psistr}),
\begin{align*}
d\th^{\a}&=\th^{\b}\wedge{\psi _{\b}}^{\a}+\partial\r\wedge{\psi _{0}}^{\a} \\
         &=\th^{\b}\wedge\Bigl({\th _{\b}}^{\a}+\frac{\partial\r}{-\r}\ {\d _{\b}}^{\a}\Bigr)+\partial\r\wedge\Bigl({\th _{0}}^{\a}+\frac{1}{-\r}\th^{\a}\Bigr) \\
         &=\th^{\b}\wedge{\th _{\b}}^{\a}+\partial\r\wedge{\th _{0}}^{\a},
\end{align*}
while the structure equation (\ref{GLstr}) gives
\begin{equation*}
d\th^{\a}=\th^{\b}\wedge{\varphi_{\b}}^{\a}-\sqrt{-1}{A^{\a}}_{\ol{\g}}\partial\r\wedge\th^{\ol{\g}}-r^{\a}\partial\r\wedge\ol{\partial}\r+\frac{1}{2}rd\r\wedge\th^{\a}.
\end{equation*}
So we have 
\begin{equation}\label{coeff1}
{{\th _{0}}^{\a}}_{\ol{0}}=-r^{\a},\ \ \ {{\th _{0}}^{\a}}_{\ol{\b}}=-\sqrt{-1}{A^{\a}}_{\ol{\b}}.
\end{equation}
Similarly, comparing the equations
$$
-\partial\ol{\partial}\r=d\th^{0}=\th^{\b}\wedge{\psi_{\b}}^{0}+\partial\r\wedge{\psi _{0}}^{0} 
       =\th^{\b}\wedge{\th_{\b}}^{0}+\partial\r\wedge{\th_{0}}^{0}
       $$
       and
       $$
\partial\ol{\partial}\r =h_{\a\ol{\b}} \th^{\a}\wedge\th^{\ol{\b}}+r\partial\r\wedge\ol{\partial}\r,
$$
we get
\begin{equation}\label{coeff2}
{{\th _{0}}^{0}}_{\ol{0}} =-r,\ \ \ {{\th _{0}}^{0}}_{\ol{\g}}=0,\ \ \ {{\th _{0}}^{0}}_{\g}={{\th _{\g}}^{0}}_{0},\ \ \ 
{{\th _{\a}}^{0}}_{\ol{0}} =0,\ \ \ {{\th _{\a}}^{0}}_{\ol{\g}}=-h_{\a\ol{\g}}.
\end{equation}
The equation (\ref{psimet}) for $(i, \ol{j})=(\a, \ol{0})$ is equivalent to
\begin{equation}\label{met}
\frac{1-r\r}{\r^{2}}{{\th _{\a}}^{0}}+\frac{h_{\a\ol{\b}}}{-\r}\Bigl( {{\th _{\ol{0}}}^{\ol{\b}}}+\frac{1}{-\r}\th^{\ol{\b}}\Bigr)=0.
\end{equation}
From (\ref{coeff1}), (\ref{coeff2}), and (\ref{met}), we have
\begin{equation*}
{{\th _{\a}}^{0}}_{\g}=\frac{\sqrt{-1}\r}{1-r\r}A_{\a\g},\ \ \ {{\th _{\a}}^{0}}_{0}=\frac{-\r}{1-r\r}r_{\a},\ \ \ 
{{\th _{0}}^{\b}}_{\g}=r{\d _{\g}}^{\b},\ \ \ {{\th _{0}}^{\b}}_{0}=0.
\end{equation*}
The equation (\ref{psimet}) for $(i, \ol{j})=(0, \ol{0}), (\a, \ol{\b})$ and (\ref{coeff2}) give  
\begin{equation*}
{{\th _{0}}^{0}}_{0}=\frac{-\r (r^{2}+r_{0})}{1-r\r},\ \ \ {\th _{\a}}^{\g}h_{\g\ol{\b}}+h_{\a\ol{\g}}{\th _{\ol{\b}}}^{\ol{\g}}=dh_{\a\ol{\b}}.
\end{equation*}
Finally, using (\ref{coeff1}), we obtain
\begin{align*}
d\th^{\a}&=\th^{\b}\wedge{\th _{\b}}^{\a}+\pa\r\wedge{\th _{0}}^{\a} \\
         &=\th^{\b}\wedge\Bigl( {\th _{\b}}^{\a}-\sqrt{-1}r\vartheta{\d _{\b}}^{\a}\Bigr) -\sqrt{-1}{A^{\a}}_{\ol{\g}}\pa\r\wedge\th^{\ol{\g}}-r^{\a}\pa\r\wedge\ol{\pa}\r+\frac{r}{2}d\r\wedge\th^{\a}.
\end{align*}
This implies that ${\th _{\b}}^{\a}-\sqrt{-1}r\vartheta{\d _{\b}}^{\a}$ satisfies the characterization of the Graham-Lee  connection ${\varphi_{\b}}^{\a}$. Now it is straightforward to calculate ${W_{i}}^{j}$:
\begin{align*} 
{W_{i}}^{j}&={\Psi_{i}}^{j}+{K_{i}}^{j} \\
           &={\Th_{i}}^{j}-d{Y_{i}}^{j}-{Y_{i}}^{k}\wedge{Y_{k}}^{j}+{Y_{i}}^{k}\wedge{\th_{k}}^{j}+{\th_{i}}^{k}\wedge{Y_{k}}^{j}+{K_{i}}^{j}
\end{align*}
and 
\begin{align*}
d{Y_{i}}^{j}+{Y_{i}}^{k}\wedge{Y_{k}}^{j}&=(g_{k\ol{l}}\th^k\wedge\th^{\ol{l}}){\d_{i}}^{j}-{\d_{i}}^{0}\frac{\ol{\partial}\r\wedge\th^{j}}{\r^{2}}+{\d_{i}}^{0}\frac{d\th^{j}}{\r} \\
{Y_{\b}}^{k}\wedge{\th_{k}}^{\a}+{\th_{\b}}^{k}\wedge{Y_{k}}^{\a}&=\frac{1}{\r}{\th _{\b}}^{0}\wedge\th^{\a} \\
{Y_{0}}^{k}\wedge{\th_{k}}^{0}+{\th_{0}}^{k}\wedge{Y_{k}}^{0}&=\frac{1}{\r}\th^{\g}\wedge{\th_{\g}}^{0}      \\
{Y_{\b}}^{k}\wedge{\th_{k}}^{0}+{\th_{\b}}^{k}\wedge{Y_{k}}^{0}&=-\frac{1}{\r}\partial\r\wedge{\th_{\g}}^{0}  \\
{Y_{0}}^{k}\wedge{\th_{k}}^{\a}+{\th_{0}}^{k}\wedge{Y_{k}}^{\a}&=\frac{1}{\r}\th^{\g}\wedge{\th_{\g}}^{\a}+\frac{1}{\r}
\partial\r\wedge{\th_{0}}^{\a}+\frac{1}{\r}{\th_{0}}^{0}\wedge\th^{\a} \\
{K_{i}}^{j}&=(g_{k\ol{l}}\th^k\wedge\th^{\ol{l}}){\d_{i}}^{j}+g_{i\ol{l}}\th^{j}\wedge\th^{\ol{l}}
\end{align*}
from which (\ref{WTh}) follows.
\end{proof}
From the proposition above, we can express the renormalized connection and curvature at the boundary with the transverse curvature, the TW curvature and torsion, and their covariant derivatives:
\begin{cor}\label{conn-boundary}
With respect to an admissible coframe $\{\th^{0}=\partial\r,\th^{\a}\}$, the renormalized connection and curvature satisfy 
\begin{align*}
{\th_{\b}}^\a|_{TM} &={\omega_\b}^\a+\sqrt{-1} r\th{\delta_\b}^\a, \\
{\th_{0}}^\a|_{TM}  &=r\th^\a-\sqrt{-1}{A^\a}_{\ol{\g}}\th^{\ol{\g}}+\sqrt{-1}r^\a\th, \\
{\th_{\b}}^0|_{TM}  &=-h_{\a\ol{\g}}\th^{\ol{\g}}, \\
{\th_{0}}^0|_{TM}   &=\sqrt{-1}r\th, \\
{\Th_{\b}}^\a|_{TM} &=({{R_\b}^\a}_{\g\ol{\d}}-r{\d_\b}^\a h_{\g\ol{\d}}-r{\d_\g}^\a h_{\b\ol{\d}})\th^\g\wedge\th^{\ol{\d}}+({A_{\b\g,}}^\a+\sqrt{-1}r_\g{\d_\b}^\a )\th^\g\wedge\th \\
&\quad-({A^\a}_{\ol{\g},\b}-\sqrt{-1}r_{\ol{\g}}{\d_\b}^\a-\sqrt{-1}r^\a h_{\b\ol{\g}})\th^{\ol{\g}}\wedge\th-\sqrt{-1}A_{\b\g}\th^\g\wedge\th^\a, \notag \\
{\Th_{\b}}^0|_{TM}  &=A_{\b\g}\th^\g\wedge\th.
\end{align*}
\end{cor}
When we substitute the above formulas to $\Pi$, we can neglect terms involving $\th$ in 
${\th_{0}}^{\a}$ and ${\Th_{\b}}^{\a}$ since each $\Phi_{k}^{(i)}$ contains ${\th_{0}}^{0}$ or ${\Th_{\b}}^{0}$. Consequently, $\Pi|_{TM}$ contains no covariant derivatives and we obtain the boundary term $\mu(M, \r)$ of the form stated in Theorem \ref{first-mainthm}.
\section{Proof of Theorem \ref{second-mainthm}}\label{section-pe}
  \subsection{Pseudo-Einstein structures} 
We start with a review of some notions concerned. Let $M$ be a $(2n+1)$-dimensional strictly pseudoconvex CR manifold. The {\it CR canonical bundle} $K_M$ is defined by $\bigwedge^{n+1}(T^{0,1}M)^{\perp}\subset \bigwedge^{n+1}(\C\otimes T^{\ast}M)$. When a local section $\zeta$ of $K_M$ satisfies 
\begin{equation*}
\th\wedge (d\th)^{n}=(\sqrt{-1})^{n^2}n!\,\th\wedge(T\lrcorner \z)\wedge(T\lrcorner \ol{\z})
\end{equation*}
for a contact form $\th$, we say that $\th$ is {\it volume normalized} with respect to $\zeta$. 
We remark that $\th$ is volume normalized if and only if $\zeta$ is locally expressed in the form
\begin{equation*}
\z =e^{\sqrt{-1}\g}\th\wedge\th^{1}\wedge\cdots\wedge\th^{n}
\end{equation*}
with a real function $\g$, for an admissible coframe satisfying $d\th =\sqrt{-1}\sum\th^{\a}\wedge\th^{\ol{\a}}$. 
\begin{dfn}
A contact form $\th$ is {\em pseudo-Einstein} if, in a neighborhood of each point $p\in M$, there exists a closed section of $K_M$ with respect to which $\th$ is volume normalized.
\end{dfn}
It is shown in \cite{Lee2} that, when $n\ge 2$, a contact form is pseudo-Einstein if and 
only if the TW Ricci curvature satisfies 
\begin{equation}
{\rm Ric}_{\a\ol{\b}}=\frac{1}{n}\scal\  h_{\a\ol{\b}}.\label{pseudo}
\end{equation}
This equation is analogous to the Einstein equation in Riemannian geometry, but (\ref{pseudo}) does not imply that the 
scalar curvature is constant. When $n=1$, the equation (\ref{pseudo}) is always satisfied, and it is shown in \cite{Hi1} that a contact form is pseudo-Einstein if and only if $(\scal)_{1}-\sqrt{-1}{A_{11,}}^{1}=0$. 

The set of pseudo-Einstein contact forms is parametrized by CR pluriharmonic functions. This fact is shown in \cite{Lee2} 
for $n\ge 2$, but the proof is also valid for $n=1$.
\begin{prop}[\cite{Lee2}]
If $\th$ is pseudo-Einstein, then $\widehat{\th}=e^{\U}\th$ is pseudo-Einstein if and only if $\U$ is CR plurihamonic.
\end{prop}
\subsection{Complex Monge-Amp\`{e}re equation}Let $\r$ be a defining function of an $(n+1)$-dimensional pseudoconvex manifold $X$ with the boundary $M$. Take local holomorphic coordinates $(z^0,\dots,z^n)$. We denote partial derivatives by subscripts, e.g. $\pa f/\pa z^i=f_i$. We set 
\begin{equation*}
\J[\r]= \det \begin{pmatrix}
                    \r        & \r_{\ol{j}} \\
                    \r_{i}    & \r_{i\ol{j}} 
              \end{pmatrix}      
\end{equation*} 
and call $\J$ {\it the Monge-Amp\`{e}re operator}. By the column transformation,
\begin{equation}\label{monge}
\J[\r]=\det \begin{pmatrix}
                    \r        & 0                                            \\
                    \r_{i}    & \r_{i\ol{j}}-\frac{\r _{i}\r_{\ol{j}}}{\r} 
              \end{pmatrix} 
     =-(-\r)^{n+2}\det\Bigl(\frac{\pa^2}{\pa z^i\pa z^{\ol{j}}}\log \frac{1}{-\r}\Bigr).
\end{equation}
From this expression, one can see that
\begin{equation}\label{invprop}
\J[e^{f}\r]=e^{(n+2)f}\J[\r]\quad\text{if}\quad \pa\ol{\pa}f=0. 
\end{equation}
Also, we have the transformation law for the change of local coordinates: Let $w=F(z)$ be another holomorphic coordinate system. We distinguish the Monge-Amp\`{e}re operators in two coordinate systems by the corresponding subscripts. Then, 
\begin{equation*}
\J_{z}[\r]=|\det F^{\prime}|^{2}\J_{w}[\r],
\end{equation*}
where $F^{\prime}$ denotes the holomorphic Jacobian. Combining the above two formulas, we have 
\begin{equation*}
\J_{z}[|\det F^{\prime}|^{-\frac{2}{n+2}}\r] = \J_{w}[\r].
\end{equation*}
Suppose that $g=\pa\ol{\pa}\log(-1/\r)$ defines a K\"{a}hler metric. From
 (\ref{monge}), 
\begin{equation}\label{MongeEin}
-\pa\ol{\pa}\log(-\J[\r])={\rm Ric}(g)+(n+2)g,
\end{equation}
so if $\J[\r]=-e^{f}$ 
for some pluriharmonic function $f$, then $g$ is a K\"{a}hler-Einstein metric. By (\ref{invprop}), to find such a defining function is equivalent to solving the following {\it Monge-Amp\`{e}re equation}:
\begin{align*}
\J[\r]&=-1,\ \r<0\ \ \text{in}\ X \\
\r&=0 \ \ \ \ \ \ \ \ \ \ \ \ \text{on}\ M.
\end{align*}
For a strictly pseudoconvex domain in $\C^{n+1}$, Cheng and Yau \cite{CY} established the existence and uniqueness of the solution to the Monge-Amp\`{e}re equation. However, we do not need the exact solution for our purpose. We use the following Fefferman's approximate solutions:
\begin{thm}[\cite{Fe2}]
For a strictly pseudoconvex domain $X$ in $\C^{n+1}$, there exists a defining function $\r\in C^{\infty}(\ol{X})$ such that 
\begin{equation}\label{approxMonge}
\J[\r]=-1+O(\r ^{n+2}).
\end{equation}
If another defining function $\widetilde{\r}$ satisfies the same equation, then
\begin{equation*}
\r -\widetilde{\r}=O(\r^{n+3}).
\end{equation*}
\end{thm}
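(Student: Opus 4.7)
The plan is Fefferman's classical iterative construction \cite{Fe}: build $\r$ order-by-order at the boundary by solving a linear algebraic equation at each step, with the process terminating at order $n+2$ because a certain combinatorial coefficient vanishes there.

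For existence, I would start with any smooth defining function $\r_0$. Strict pseudoconvexity and the bordered-Hessian computation in the proof of Proposition \ref{thetaW} give $-\J[\r_0]\big|_M > 0$, so extending this boundary value to a smooth positive function $\phi$ near $\ol{X}$ and setting $\r_1 = \phi^{-1/(n+2)}\r_0$ yields $\J[\r_1] = -1 + O(\r_1)$ (the $\pa\ol{\pa}$ of the non-pluriharmonic factor $\log\phi^{-1/(n+2)}$ contributes only through the $(-\r_1)^{n+2}$ prefactor, hence only at order $\r_1$). Now assume inductively that $\J[\r_k] = -1 + \phi_k\r_k^{k+1} + O(\r_k^{k+2})$ for some smooth $\phi_k$ and some $0 \le k \le n$, and seek $\r_{k+1} = \r_k\bigl(1 + \eta\,\r_k^{k+1}\bigr)$ with $\eta$ to be determined.

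The heart of the argument is to expand $\J[\r_{k+1}]$ using the formula $\J[\r] = -(-\r)^{n+2}\det\bigl(\pa\ol{\pa}\log(-1/\r)\bigr)$ together with the identity $\log(-1/\r_{k+1}) = \log(-1/\r_k) - \log(1+\eta\r_k^{k+1})$. Writing $g_k = \pa\ol{\pa}\log(-1/\r_k)$ and linearizing the determinant,
\begin{equation*}
\J[\r_{k+1}] = (1 + \eta\r_k^{k+1})^{n+2}\J[\r_k]\bigl(1 - \mathrm{tr}(g_k^{-1}\pa\ol{\pa}\log(1+\eta\r_k^{k+1})) + \cdots\bigr).
\end{equation*}
The key is to identify the leading-order behaviour of $\mathrm{tr}(g_k^{-1}\pa\ol{\pa}(\eta\r_k^{k+1}))$: the transverse block of $g_k^{-1}$ scales like $\r_k^2$ and the dominant term of $\pa\ol{\pa}(\eta\r_k^{k+1})$ in the transverse direction is $(k+1)k\,\eta\,\r_k^{k-1}\,\pa\r_k\wedge\ol{\pa}\r_k$, so their pairing produces a contribution at precisely order $\r_k^{k+1}$. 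Collecting all mixed contributions together with the $(1+\eta\r_k^{k+1})^{n+2}$ prefactor, the obstruction reads
\begin{equation*}
\J[\r_{k+1}] = -1 + \bigl(\phi_k - c_k\,\eta\bigr)\r_k^{k+1} + O(\r_k^{k+2}),
\end{equation*}
where $c_k$ is a combinatorial constant which turns out to be nonzero for $0 \le k \le n$ and to vanish at $k = n+1$. Solving $\eta = \phi_k/c_k$ kills the obstruction; iterating from $k = 0$ up to $k = n$ produces $\r$ with $\J[\r] = -1 + O(\r^{n+2})$.

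For the uniqueness statement, let $\widetilde{\r} = \r + \psi$ with $\psi\big|_M = 0$, and assume inductively that $\psi = \r^m\eta$ for some $2 \le m \le n+2$. The same expansion applied to $\r + \r^m\eta$ gives $\J[\widetilde{\r}] - \J[\r] = -\widetilde{c}_m\,\eta\,\r^{m-1} + O(\r^m)$ with $\widetilde{c}_m$ the analogous nonzero constant for $2 \le m \le n+2$. Since the left-hand side is $O(\r^{n+2})$, we conclude $\eta\big|_M = 0$ and hence $\psi = O(\r^{m+1})$; iterating raises the vanishing order up to $\psi = O(\r^{n+3})$. The main difficulty — and the reason the theorem stops at $n+2$ — is the explicit calculation that produces $c_k$ and $\widetilde{c}_m$: one must disentangle how the $(-\r)^{n+2}$ prefactor, the pluriharmonic transformation rule (\ref{invprop}), and the tangential/transverse block decomposition of $g_k^{-1}$ conspire, and verify that the resulting coefficient is nonzero in the stated range but degenerates at the critical order, reflecting the genuine nonlinear obstruction to solving $\J[\r] = -1$ smoothly to all orders.
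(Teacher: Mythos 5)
The paper offers no proof of this statement --- it is quoted verbatim from Fefferman \cite{Fe} --- so the only meaningful comparison is with Fefferman's original argument, and your reconstruction follows it faithfully and correctly in structure: one multiplicative normalization to reach $\J[\r_1]=-1+O(\r_1)$, then corrections $\r_{k+1}=\r_k(1+\eta\r_k^{k+1})$ whose effect on $\J$ is computed by linearizing $\det\bigl(I-g_k^{-1}\pa\ol{\pa}\log(1+\eta\r_k^{k+1})\bigr)$, with the process terminating because the linearization coefficient degenerates one step past $n+2$. To close the loop you leave open: in the adapted coframe of \S 2.2 one has $g^{0\ol{0}}=\r^2/(1-r\r)$ and $g^{\a\ol{\b}}=-\r h^{\a\ol{\b}}$, whence
\begin{equation*}
\mathrm{tr}\bigl(g^{-1}\pa\ol{\pa}(\eta\r^{k+1})\bigr)=(k+1)(k-n)\,\eta\,\r^{k+1}+O(\r^{k+2}),
\end{equation*}
the derivative-of-$\eta$ terms being one order higher; combined with the prefactor $(1+\eta\r^{k+1})^{n+2}$ this gives $c_k=(n+2)+(k+1)(n-k)=(n+1-k)(k+2)$, which is indeed nonzero for $0\le k\le n$ and vanishes exactly at $k=n+1$, and likewise $\widetilde{c}_m=c_{m-2}=m(n+3-m)\neq 0$ for $2\le m\le n+2$, so both halves of your induction go through as claimed. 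Two minor imprecisions, neither fatal: the parenthetical justifying the base step is not right as stated --- the non-pluriharmonic part of $\log\phi^{-1/(n+2)}$ enters through the determinant factor $\det\bigl(I-g^{-1}\pa\ol{\pa}\log\phi^{-1/(n+2)}\bigr)$ and is $O(\r)$ because $g^{-1}=O(\r)$, which is the same mechanism as in your inductive step, not a consequence of the $(-\r)^{n+2}$ prefactor; and the bordered-Hessian computation in Proposition 3.4 only yields $\J[\r_0]\neq 0$ where $\pa\ol{\pa}\log(-1/\r_0)>0$, which an arbitrary defining function need not satisfy in the interior, so for the base step you should instead evaluate $\J[\r_0]$ on $M$ itself, where strict pseudoconvexity gives $\J[\r_0]|_M<0$ directly.
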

We can take Fefferman's defining function in local coordinates around each point on the boundary, but we need a global potential function to apply Theorem \ref{first-mainthm}.
So we assume that $X$ admits a {\it global approximate solution} to the Monge-Amp\`{e}re equation, in the sense of \cite{HPT}:
\begin{dfn}\label{global}
A function $\r \in C^{\infty}(\ol{X})$ is called a global approximate solution to the Monge-Amp\`{e}re equation if for any
$p\in M$, there is a holomorphic coordinate system around $p$ such that $\r$ is a Fefferman's approximate solution 
in the chosen coordinates.
\end{dfn}
\noindent This notion is closely related to the pseudo-Einstein structures on the boundary:
\begin{thm}[\cite{HPT}]\label{HPTthm}
There is a global approximate solution
$\rho$ to the Monge-Amp\`{e}re equation on $X$ if and only if 
$M$ carries a pseudo-Einstein contact form. In this case, the contact form $(\sqrt{-1}/2)(\ol{\pa}\r-\pa\r)|_{TM}$ induced by a global approximate solution $\r$ is pseudo-Einstein. 
\end{thm}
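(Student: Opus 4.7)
The plan is to leverage the direct local relationship between Fefferman's equation and the volume-normalization of the induced contact form by holomorphic $(n+1)$-forms. The central computation is an identity of the form
\[
\th\wedge(d\th)^n = c_n\,(-\J[\r])\,\th\wedge(T\lrcorner\z)\wedge(T\lrcorner\ol{\z})
\quad\text{along }M,
\]
valid in any local holomorphic chart $(z^1,\dots,z^{n+1})$ with $\z:=dz^1\wedge\cdots\wedge dz^{n+1}$ and dimensional constant $c_n=(\sqrt{-1})^{n^2}n!$; it is obtained by expanding $d\th=\sqrt{-1}\,\pa\ol{\pa}\r$ and identifying the top-power coefficient against $\pa\r\wedge\ol{\pa}\r\wedge(\pa\ol{\pa}\r)^n$ as $(-\J[\r])$ times the Euclidean volume. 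Consequently $\th$ is volume-normalized with respect to $\z|_M$ iff $\J[\r]|_M=-1$; the Fefferman condition $\J[\r]=-1+O(\r^{n+2})$ built into Definition~\ref{global} is exactly what is needed to make $\z|_M$ a volume-normalizing closed section of $K_M$.

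For the forward direction, suppose $\r$ is a global approximate solution and $p\in M$. Pick holomorphic coordinates around $p$ in which $\r$ is a Fefferman approximate solution; then $\z=dz^1\wedge\cdots\wedge dz^{n+1}$ is holomorphic, hence $d$-closed, and its restriction to $M$ is a local closed section of $K_M$ with respect to which $\th=(\sqrt{-1}/2)(\ol{\pa}\r-\pa\r)|_M$ is volume-normalized. Since $p$ was arbitrary, $\th$ is pseudo-Einstein.

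For the reverse direction, start with a pseudo-Einstein contact form $\th_0$ and a cover $\{U_\a\}$ of $M$ by sets carrying local closed sections $\z_\a$ of $K_M$ volume-normalizing $\th_0$. Being $(n+1,0)$ and $d$-closed on $M$, each $\z_\a$ is (at least formally, sufficient for jet-matching) the restriction of a holomorphic $(n+1)$-form on a one-sided neighborhood $\widetilde U_\a\subset\ol{X}$; choose holomorphic coordinates $z_\a$ on $\widetilde U_\a$ with $\z_\a=dz^1_\a\wedge\cdots\wedge dz^{n+1}_\a$, and let $\r_\a$ be a Fefferman approximate solution in these coordinates, rescaled so that $(\sqrt{-1}/2)(\ol{\pa}\r_\a-\pa\r_\a)|_M=\th_0$. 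On an overlap $U_\a\cap U_\b$, the sections are related by $\z_\b=f_{\a\b}\z_\a$ where $f_{\a\b}$ is CR-holomorphic (from $d(f_{\a\b}\z_\a)=0$) and satisfies $|f_{\a\b}|=1$ on $M$ (since both sections volume-normalize $\th_0$); hence the transition $z_\b=F_{\a\b}(z_\a)$ satisfies $|\det F_{\a\b}'|=1$ on $M$. By the transformation law $\J_{z_\a}[\r]=|\det F_{\a\b}'|^2\J_{z_\b}[\r]$ and the uniqueness of the $(n+2)$-jet of Fefferman's solution, $\r_\a$ and $\r_\b$ share the same $(n+2)$-jet along $M\cap\widetilde U_\a\cap\widetilde U_\b$. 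A partition-of-unity jet-patching then yields $\r\in C^\infty(\ol X)$ whose $(n+2)$-jet at each $p\in M$ matches some local Fefferman solution; this $\r$ is a global approximate solution in the sense of Definition~\ref{global}, and by construction induces $\th_0$.

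The principal obstacle is the jet-compatibility step in the reverse direction. Once one establishes the elementary observation that two closed sections of $K_M$ volume-normalizing the same $\th_0$ differ by a CR-holomorphic function of unit modulus on $M$, and combines it with the coordinate-transformation law for $\J$ and the uniqueness of the Fefferman $(n+2)$-jet, the remaining global existence reduces to a routine partition-of-unity construction at the level of jets along $M$.
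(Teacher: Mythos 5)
Your overall strategy is sound and is essentially the standard route underlying the cited reference \cite{HPT}: the pointwise identity relating $\J[\r]|_M$ to the volume normalization of $(\sqrt{-1}/2)(\ol{\pa}\r-\pa\r)|_M$ with respect to the closed section $dz^1\wedge\cdots\wedge dz^{n+1}|_M$ of $K_M$. Note that this is a genuinely different argument from the one the paper actually gives for the ``in this case'' clause: in \S 5.3 the paper derives ${\rm Ric}_{\a\ol{\b}}=(n+1)r|_M h_{\a\ol{\b}}$ from the approximate Einstein equation and invokes Lee's curvature characterization of pseudo-Einstein contact forms (with a separate argument for $n=1$), whereas your route goes through the definition via closed sections of $K_M$ and works uniformly in all dimensions. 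The converse direction is not proved in the paper at all, so your patching construction supplies something the paper only cites.

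There is, however, a concrete gap in the overlap-compatibility step. You establish only that $|\det F'_{\a\b}|=1$ \emph{on} $M$. That gives $\J_{z_\a}[\r_\b]=|\det F'_{\a\b}|^2\,\J_{z_\b}[\r_\b]=-1+O(\r)$, which is far short of the $-1+O(\r^{n+2})$ needed to conclude that $\r_\b$ is a Fefferman approximate solution in the $z_\a$-chart, and hence the uniqueness of the Fefferman $(n+2)$-jet cannot yet be invoked to get $\r_\a-\r_\b=O(\r^{n+3})$. What saves the argument is that $u=\log|\det F'_{\a\b}|^2$ is pluriharmonic (since $\det F'_{\a\b}$ is holomorphic and nonvanishing) and vanishes on $M$; a pluriharmonic function vanishing on a strictly pseudoconvex hypersurface vanishes there to infinite order, because writing $u=\r^k w$ and restricting $\pa\ol{\pa}u=0$ to $\mathcal{H}\otimes\ol{\mathcal{H}}$ forces $w|_M=0$ at each order by nondegeneracy of the Levi form. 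You must state and use this lemma (equivalently, apply the paper's identity $\J_{z}[|\det F'|^{-2/(n+2)}\r]=\J_{w}[\r]$ together with the infinite-order vanishing of $\log|\det F'|$) before the jet-matching goes through. Two smaller points: the extension of the closed sections $\z_\a$ to formally holomorphic $(n+1)$-forms, which you flag but do not carry out, reduces to extending a CR function on a strictly pseudoconvex hypersurface to an almost-holomorphic one and deserves at least a sentence; and no ``rescaling'' of $\r_\a$ is possible or needed --- the Fefferman jet in the adapted coordinates is essentially unique and automatically induces $\th_0$, because volume normalization with respect to a fixed nonvanishing section of $K_M$ determines the contact form uniquely.
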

\subsection{Approximate Einstein equation}When $\r$ is an approximate solution to the Monge-Amp\`{e}re equation, the metric 
$g=\pa\ol{\pa}\log(-1/\r)$ satisfies the approximate Einstein equation:
\begin{equation}\label{approx}
{\rm Ric}(g)+(n+2)g=-\pa\ol{\pa}\log(-\J[\r])=\pa\ol{\pa}\,O(\r^{n+2}),
\end{equation}
where $\pa\ol{\pa}\,O(\r^{n+2})$ stands for a term of the form $\pa\ol{\pa}(\r^{n+2}\phi)$ for some function $\phi\in 
C^{\infty}(\ol{X})$. In order to derive a formula for the boundary value of the transverse curvature from the Einstein equation, let us calculate the trace of the renormalized curvature in two ways: Firstly, taking traces of (\ref{defW}) and (\ref{WTh}), and using $u_i\wedge\th^i=0$, we have  
\begin{equation*}
{\Th_{i}}^{i}={W_{i}}^{i}={\Psi_{i}}^{i}+{K_{i}}^{i}={\rm Ric}(g)+(n+2)g,
\end{equation*}
so that, by (\ref{approx}),
\begin{equation}\label{Th1}
{\Th_{i}}^{i}=\pa\ol{\pa}\,O(\r^{n+2}).
\end{equation}
Secondly, by Propositions \ref{GLcurv} and \ref{connection}, we have 
\begin{equation}\label{Th2}
{\Th_{i}}^{i}=d{\th_{\g}}^{\g}+d{\th_{0}}^{0},
\end{equation}
where 
\begin{align*}
d{\th_{\g}}^{\g}&=d{\varphi_{\g}}^{\g}+\sqrt{-1}ndr\wedge\th +\sqrt{-1}nrd\th, \\
d{\varphi_{\g}}^{\g}&={\rm Ric}_{\g\ol{\mu}}\th^{\g}\wedge\th^{\ol{\mu}}+\sqrt{-1}{A_{\b\g,}}^{\b}\th^{\g}\wedge\ol{\partial}\r +\sqrt{-1}{A_{\ol{\b}\ol{\g},}}^{\ol{\b}}\th^{\ol{\g}}\wedge\partial\r  \\
&\quad+d\r\wedge\frac{n+2}{2}\Bigl(r_{\g}\th^{\g}-r_{\ol{\g}}\th^{\ol{\g}}\Bigr)+\frac{1}{2}\Bigl(\Delta_{b}r-2|A|^2\Bigr)\partial\r\wedge\ol{\partial}\r, \notag \\
d{\th_{0}}^{0}&=-dr\wedge\ol{\partial}\r-r\partial\ol{\partial}\r +\frac{r^2}{1-r\r}\partial\r\wedge\ol{\partial}\r +\frac{-2r\r}{1-r\r}dr\wedge\partial\r \\
&\quad+\frac{-r^2\r}{(1-r\r)^{2}}d(r\r)\wedge\partial\r+\frac{\r r^2}{1-r\r}\partial\ol{\partial}\r-\frac{1}{1-r\r}d\r\wedge\partial r \notag \\
&\quad+\frac{-\r}{(1-r\r)^2}d(r\r)\wedge\partial r+\frac{\r}{1-r\r}\partial\ol{\partial}r. \notag
\end{align*}
Comparing the coefficients of $\pa\r\wedge\ol{\pa}\r$, $\th^{\g}\wedge\ol{\partial}\r$, $\th^{\a}\wedge\th^{\ol{\b}}$ in 
(\ref{Th1}) and (\ref{Th2}), we obtain
\begin{multline}
-nr_{N}+\frac{1}{2}(\Delta_{b}r-2|A|^2)-nr^2+\frac{\r}{1-r\r}r_{0\ol{0}}+\frac{2r^3\r}{1-r\r} \\
+\frac{r^2\r^2}{(1-r\r)^2}r_{\ol{0}}+\frac{r^3\r}{(1-r\r)^2}+\frac{\r^2}{(1-r\r)^2}r_{0}r_{\ol{0}}+\frac{r\r}{(1-r\r)^2}r_{0} \\
+\frac{4r\r}{1-r\r}r_{N}+\frac{\r}{1-r\r}r_{\g}r^{\g}=O(\r^n),\label{Ein1} 
\end{multline}
\begin{multline}
\sqrt{-1}{A_{\g\b,}}^{\b}-(n+2)r_{\g}+\frac{r_{\g}}{(1-r\r)^2}  \\
+\frac{\r^2}{(1-r\r)^2}r_{\ol{0}}r_{\g}+\frac{\r}{1-r\r}(r_{\ol{0}\g}-\sqrt{-1}r_{\ol{\mu}}{A^{\ol{\mu}}}_{\g})=O(\r^{n+1}),\label{Ein2} 
\end{multline}
\begin{multline}
{\rm Ric}_{\a\ol{\b}}-(n+1)rh_{\a\ol{\b}}+\frac{\r^2}{(1-r\r)^2}r_{\a}r_{\ol{\b}}\\
+\frac{\r r^2}{1-r\r}h_{\a\ol{\b}}
+\frac{\r}{1-r\r}(r_0h_{\a\ol{\b}}+r_{\a\ol{\b}})=O(\r^{n+1}).\label{Ein3}
\end{multline}
Setting $\r=0$ in the above equations, we have
\begin{equation}
{r_{N}|}_{M}=\frac{1}{2n}\Delta_{b}r-r^2-\frac{1}{n}|A|^2,\label{rN}
\end{equation}
\begin{equation}
\sqrt{-1}{A_{\g\b,}}^{\b}-(n+1)r_{\g}|_M=0,\label{Bianchi2}
\end{equation}
\begin{equation}
{\rm Ric}_{\a\ol{\b}}=(n+1)r|_Mh_{\a\ol{\b}}.\label{pe}
\end{equation}
For $n>1$, the equation (\ref{pe}) implies that the induced pseudo-hermitian structure at the boundary is pseudo-Einstein and
\begin{equation}
r|_{M}=\frac{\scal}{n(n+1)}.\label{rM}
\end{equation}
For $n=1$ the pseudo-Einstein condition follows from (\ref{Bianchi2}) and (\ref{rM}). Differentiating (\ref{Ein1}) 
in the $N$ direction $k$ times and setting $\r=0$ gives expressions of $N^{k+1}r|_M$ for $1\le k\le n-1$. 
When $k=n$, the coefficient of $N^{n+1}r|_M$ in the left-hand side becomes 0 and the right-hand side is a constant multiple of the Graham's obstruction function, which is known as a CR invariant (see \cite{G1}). Therefore, in this case we obtain an expression of the obstruction function in terms of the pseudo-hermitian structure. 

\quad \\

From (\ref{rM}) and Corollary \ref{conn-boundary}, we can express $\mu(M,\r)$ only with pseudo-hermitian structure.  
For $n=1$, we have
\begin{equation*}
\int_{M^3}\Pi=\frac{1}{4\pi^2}\int_{M}\Bigl(|A|^2-\frac{1}{4}(\scal)^2\Bigr)\th\wedge d\th.
\end{equation*}
This agrees with the Burns-Epstein invariant and (\ref{decom}) recovers the formula 
in \cite{BE2}. For $n=2$, we have 
\begin{equation}\label{five-dim-mu}
\int_{M^5}\Pi 
=\frac{1}{16\pi^3}\int_{M}\Bigl(\frac{1}{54}(\scal)^3-\frac{1}{12}|R|^2\scal+R_{\a\ol{\b}\g\ol{\d}}A^{\a\g}A^{\ol{\b}\ol{\d}}\Bigr)\th\wedge (d\th)^2.
\end{equation}
\subsection{CR invariance}
It remains to show that $\mu(M,\r)$ gives a CR invariant, which we denote by $\mu(M)$. 
Recall that $\int_M \Pi_\th$ is defined for a CR manifold with a pseudo-Einstein contact form $\th$. We shall prove that this integral is invariant under a change of $\th$. 
\begin{thm}
 If $\th$ and $\widehat{\th}$ are pseudo-Einstein contact forms on $M$, then 
\begin{equation*}
\int_M \Pi_\th=\int_M \Pi_{\widehat{\th}}.
\end{equation*}
\end{thm} 
\begin{proof}
In the case $n=1$, the integral is the Burns-Epstein invariant so it is invariant under the change of $\th$. 
In higher dimensions, we can realize $M$ as the boundary of a strictly pseudoconvex domain $X$ in a K\"{a}hler manifold 
(see \cite[Theorem 8.1]{Le}). We may assume that $M$ is connected. Let $\r$, $\widehat{\r}$ be global approximate solutions to the Monge-Amp\`{e}re equation such that $(\sqrt{-1}/2)(\ol{\pa}\r-\pa\r)|_{TM}=\th$ and $(\sqrt{-1}/2)(\ol{\pa}\widehat{\r}-\pa\widehat{\r})|_{TM}=\widehat{\th}$. We can write as $\widehat{\th}=e^\U\th$ with a 
CR pluriharmonic function $\U$. Since $X$ is K\"{a}hler and $M$ is connected, $\U$ extends to a pluriharmonic function on $X$ (see \cite[Theorem 7.1]{Hi2}). We set $\r_t=e^{t\U}\r$ $(t\in[0,1])$, and write the corresponding renormalized connection and curvature as ${(\th_{t})_{i}}^{j}$ and ${(\Th_{t})_{i}}^{j}$ respectively. By the renormalized Gauss-Bonnet formula, it 
suffices to show that 
\begin{equation}\label{van-anomaly}
\frac{d}{dt}\Big|_{t=0}\int_{X}c_{n+1}(\Th_{t})=0.
\end{equation}
From (\ref{defth}), we have 
\begin{equation*}
{(\th_{t})_i}^j={\th_i}^j+t({\d_i}^j\U_k+{\d_k}^j\U_i)\th^k
\end{equation*}
so that 
\begin{equation*}
{\dot{\th}_i}^{\ j}=({\d_i}^j\U_k+{\d_k}^j\U_i)\ \th^k.
\end{equation*}
Here the dot denotes the differentiation in $t$ at $t=0$. Set 
\begin{equation*}
\a=\sum_{\sigma,\tau}{\rm sign}(\sigma\tau)\ {\dot{\th}_{\sigma(0)}}^{\quad\tau(0)}{\Th_{\sigma(1)}}^{\tau(1)}\cdots{\Th_{\sigma(n)}}^{\tau(n)}.
\end{equation*}
Then, calculating with a $(1,0)$-frame for which ${\th_i}^j=0$ at a point, we have 
\begin{align*}
d\a &=\sum_{\sigma,\tau}{\rm sign}(\sigma\tau)\ {\dot{\Th}_{\sigma(0)}}^{\quad\tau(0)}{\Th_{\sigma(1)}}^{\tau(1)}\cdots{\Th_{\sigma(n)}}^{\tau(n)} \\
&=\frac{1}{n+1}\cdot(n+1)!\Bigl(\frac{\sqrt{-1}}{2\pi}\Bigr)^{-(n+1)}\frac{d}{dt}\Big|_{t=0}c_{n+1}(\Th_{t})
\end{align*}
and so 
\begin{equation}\label{anomaly}
\frac{d}{dt}\Big|_{t=0}\int_{X}c_{n+1}(\Th_{t})=\frac{1}{n!}\Bigl(\frac{\sqrt{-1}}{2\pi}\Bigr)^{n+1}\int_{X}d\a.
\end{equation}
Let us prove that $\a$ can be expressed as $\pa\U\wedge Q(\Th)$ near $M$ for some ${\rm GL}(n+1,\R)$-invariant polynomial $Q$.
Put 
\begin{equation*}
P(A_{0},A_{1},\cdots ,A_{n})= 
\sum_{\sigma ,\tau\in S_{n+1}} {\rm sgn}(\sigma\tau)\ {(A_{0})_{\sigma (0)}}^{\tau (0)}\cdots{(A_{n})_{\sigma (n)}}^{\tau (n)}
\end{equation*}
so that
\begin{align}
\a &=P(({\d_i}^j\U_k+{\d_k}^j\U_i)\th^k,\Th,\cdots,\Th) \label{ALPHA} \\ 
&=P(\pa\U{\d_i}^j,\Th,\cdots,\Th)+P(\U_i\th^j,\Th,\cdots,\Th). \notag
\end{align}
Since $P$ is an invariant polynomial, it can be written as   
\begin{align*}
P(A_{0},\cdots,A_{n})=\sum_{\stackrel{i_0+\cdots +i_l=n}{\sigma\in S_{n+1}}}a_{i_0\cdots i_l}&{\rm tr}\Bigl(A_{\sigma(0)}\cdots A_{\sigma(i_0)}\Bigr){\rm tr}\Bigl(A_{\sigma(i_0+1)} 
\cdots A_{\sigma(i_0+i_1)}\Bigr) \\
&\qquad\cdots{\rm tr}\Bigl(A_{\sigma(i_{0}+\cdots +i_{l-1}+1)}\cdots A_{\sigma(n)}\Bigr). 
\end{align*}
The first term in the last line of (\ref{ALPHA}) can be written as $\pa\U\wedge Q^{\prime}(\Th)$ with an invariant polynomial $Q^{\prime}$. By the definition, the renormalized connection satisfies $d\th^i=\th^j\wedge{\th_j}^i$ and 
$\th^j\wedge{\Th_{j}}^i=0$ near $M$. Therefore, in $P(\U_i\th^j,\Th,\cdots,\Th)$, terms which do not involve ${\rm tr}(\U_i\th^j)(=\pa\U)$ must vanish near the boundary. So this is also of the required form. 

Now by Stokes' theorem we can replace $\a$ in (\ref{anomaly}) by $\pa\U\wedge Q(\Th)$. Since $\U$ is pluriharmonic in $X$ 
and $Q$ is an invariant polynomial, we have $d(\pa\U\wedge Q(\Th))=0$, from which (\ref{van-anomaly}) follows.
\end{proof} 
\section{Examples}
\subsection{Tube domains}
Let $L$ be a negative line bundle over a $2$-dimensional compact complex manifold $Y$, and $h$ a hermitian metric on $L$ such that $\underline{g}=\pa\ol{\pa}\log h>0$. We assume that $\underline{g}$ is a K\"{a}hler-Einstein metric. We consider a tube domain $X=\{v\in L\,|\, h(v,v)<1\}$ and its boundary $M=\pa X$. A defining function $\r=\log h(v,v)$ satisfies $\pa\ol{\pa}\r=\underline{g}_{\a\ol{\b}}\th^\a\wedge\th^{\ol{\b}}$, so $M$ is strictly pseudoconvex and the Levi form with respect to a contact form $\th=(\sqrt{-1}/2)(\ol{\pa}\r-\pa\r)|_{TM}$ is $\underline{g}_{\a\ol{\b}}$. It also follows that the TW connection is  given by the Levi-Civita connection ${\omega_{\b}}^{\a}$ of $\underline{g}$ and the TW torsion vanishes: $A_{\a\b}\equiv 0$. Since $\underline{g}$ is Einstein, $\th$ is pseudo-Einstein. We shall prove: 
\begin{prop}
Let L, Y, \underline{g}, M be as above. Then 
\[ \mu(M)=\frac{\sigma}{36}\Bigl(\chi(Y)-\frac{1}{8\pi^2}\int_Y |{\rm Weyl}|^2vol_{\underline{g}}\Bigr),  \]
 where $\sigma$ and {\rm Weyl} are respectively the scalar and the Weyl curvatures of \underline{g} as a Riemannian metric.
\end{prop}
\begin{proof}
Since $A_{\a\b}=0$ and $\scal$ is constant, the formula (\ref{five-dim-mu}) gives  
\begin{equation}\label{tube-mu}
\mu(M)=\frac{\scal}{96\pi^3}\int_M \Bigl(\frac{1}{9}\scal^2- \frac{1}{2}|R|^2\Bigr)\th\wedge(d\th)^2.
\end{equation}
Taking fiber coordinates $\z=re^{\sqrt{-1}t}$, we compute as
\begin{align}\label{volume-tube}
\th\wedge(d\th)^2 &=\text{Im}(\pa\r|_M)\wedge (\sqrt{-1}\underline{g}_{\a\ol{\b}}\th^\a\wedge\th^{\ol{\b}})^2 \\
                  &=\text{Im}\Bigl(\pa\log h+\frac{d\z}{\z}\Bigr)\Big|_M \wedge 2\,vol_{\underline{g}} \notag \\
                  &=2dt\wedge vol_{\underline{g}}. \notag
\end{align}
We regard $\underline{g}$ as a Riemannian metric, and denote the Riemannian and the Weyl curvatures by $(\text{Rm})_{IJKL}$ and $(\text{Weyl})_{IJKL}$ respectively. Here the indices run through values $1, 2, \ol{1}, \ol{2}$. Our sign convention is such that $(\text{Rm})_{\a\ol{\b}\g\ol{\d}}=R_{\a\ol{\b}\g\ol{\d}}$. Then since $\underline{g}$ is Einstein, we have 
\[(\text{Rm})_{IJKL}=(\text{Weyl})_{IJKL}+\frac{\sigma}{12}(\underline{g}_{IL}\underline{g}_{JK}-\underline{g}_{IK}\underline{g}_{JL}),\]
where $\sigma=2\,\scal$ is the Riemannian scalar curvature of $\underline{g}$. Hence we have  
\begin{equation}\label{weyl}
|{\rm Weyl}|^2=4|R|^2 -\frac{1}{6}\sigma^2.
\end{equation}
From (\ref{tube-mu}), (\ref{volume-tube}) and (\ref{weyl}), we obtain 
\[\mu(M)=\frac{\sigma}{36}\cdot\frac{1}{32\pi^2}\int_{Y}\Bigl(\frac{1}{6}\sigma^2-3|\text{Weyl}|^2\Bigr)vol_{\underline{g}}.\]
On the other hand, the Gauss-Bonnet formula on an Einstein 4-manifold implies 
\[\frac{1}{32\pi^2}\int_{Y}\Bigl(\frac{1}{6}\sigma^2+|\text{Weyl}|^2\Bigr)vol_{\underline{g}}=\chi(Y).\]
Consequently we have 
\[ \mu(M)=\frac{\sigma}{36}\Bigl(\chi(Y)-\frac{1}{8\pi^2}\int_Y |{\rm Weyl}|^2vol_{\underline{g}}\Bigr).  \]
\end{proof}
\subsection{Reinhardt domains}As a second example, we consider a family of Reinhardt domains $\Omega_r=\{(w^0, w^1, w^2)\in\C^3\ |\ \sum(\log|w^i|)^2<r^2\}$. These domains are strictly pseudoconvex and the formula of $\mu(\pa\Omega_r)$ is given by the following proposition.
\begin{prop}
Let $\Omega_r\subset \C^3$ be as above. Then 
\[\mu(\pa\Omega_r)=-\frac{20\pi}{27}\frac{1}{r^3}.\]
Moreover, $\Omega_r$ and $\Omega_{r^\prime}$ are biholomorphic if and only if $r=r^\prime$.
\end{prop}
\begin{proof}
By taking $\log$ of each coordinate, $\Omega_r$ is mapped biholomorphically to $\Omega^{\prime}_r=\{(z^i=x^i+\sqrt{-1}y^i)\ |\ \sum(x^i)^2<r^2,\, y^i\in\R/{2\pi\mathbb{Z}}\}\subset \C^3/\mathbb{Z}^3$. We take $\r=2(\sum(x^i)^2-r^2)$ as a defining function of $\Omega^{\prime}_r$ and compute $\mu(\pa \Omega^{\prime}_r)$. Since $O(3)\times\mathbb{T}^3$ acts transitively on $\pa\Omega^{\prime}_r$ as local CR diffeomorphisms and the contact form $\th=(\sqrt{-1}/2)(\ol{\pa}\r-\pa\r)$ is invariant under this action, it suffices to compute $\Pi_{\th}$ at a point $p=(r,0,0)\in\pa\Omega^{\prime}_r$. In a neighborhood of $p$, we can write as 
\[ \pa\ol{\pa}\r=h_{\a\ol{\b}}\th^{\a}\wedge\th^{\ol{\b}}+\kappa\pa\r\wedge\ol{\pa}\r,\]
where
\begin{align*}
h_{\a\ol{\b}}&=\d_{\a\ol{\b}}+(x^0)^{-2}x^{\a}x^{\b},\ \ \ \th^{\a}=dz^{\a}-\frac{1}{2}(x^0)^{-2}\sum_{\g}h^{\a\ol{\g}}x^{\g}\pa\r, \\
\kappa&=\frac{1}{4}(x^0)^{-2}-\frac{1}{4}(x^0)^{-4}\sum_{\mu, \g}h^{\mu\ol{\g}}x^{\mu}x^{\g}.
\end{align*}
We set $x^{\prime}=(x^1,x^2)$. Then, since $(x^0)^{-2}=r^{-2}+O(|x^{\prime}|^2)$ on $\pa\Omega^{\prime}_r$,  
\begin{align*}
h_{\a\ol{\b}}&=\d_{\a\ol{\b}}+\frac{x^\a x^\b}{r^2}+O(|x^\prime|^4), \\ 
h^{\a\ol{\b}}&=\d^{\a\ol{\b}}-\frac{x^\a x^\b}{r^2}+O(|x^\prime|^4), \\
\th^\a|_{T\pa\Omega^\prime_r} &=dz^\a-\frac{\sqrt{-1}}{2}\,\frac{x^\a}{r^2}\th+O(|x^\prime|^3).
\end{align*}
Differentiating the last equation, we have 
\[
d\th^\a|_{T\pa\Omega^\prime_r}=\th^\b\wedge\Bigl(-\frac{\sqrt{-1}}{4r^2}\th{\d_\b}^\a+\frac{x^\a}{2r^2}\d_{\b\ol{\mu}}\th^{\ol{\mu}}\Bigr)+\th\wedge\frac{\sqrt{-1}}{4r^2}{\d^\a}_{\ol{\g}}\th^{\ol{\g}}+O(|x^\prime|^2).
\]
It follows from this equation that the TW torsion and the TW connection form satisfy 
\begin{equation}\label{torsion_at_p}
A_{\a\g}=-\frac{\sqrt{-1}}{4r^2}\d_{\a\g}+O(|x^\prime|^2)
\end{equation}
and 
\begin{equation*}
{\omega_\b}^\a=-\frac{\sqrt{-1}}{4r^2}\th{\d_\b}^\a+\frac{x^\a}{2r^2}\d_{\b\ol{\mu}}\th^{\ol{\mu}}+{B_{\g\b}}^\a\th^\g+O(|x^\prime|^2)
\end{equation*}
with some tensor ${B_{\g\b}}^\a={B_{\b\g}}^\a$. One can determine ${B_{\g\b}}^\a$ to be $(x^\a/2r^2)\d_{\g\b}$ from the condition $dh_{\a\ol{\b}}={\omega _{\a}}^{\g}h_{\g\ol{\b}}+h_{\a\ol{\g}}{\omega _{\ol{\b}}}^{\ol{\g}}$. The curvature form at $p$ is given by

\begin{multline*}
{\Omega_{\b}}^\a(p)=\frac{1}{4r^2}\Bigl(\d_{\g\ol{\mu}}{\d_{\b}}^\a+\d_{\b\ol{\mu}}{\d_{\g}}^\a-\d_{\b\g}{\d_{\ol{\mu}}}^\a\Bigr)\th^\g\wedge\th^{\ol{\mu}} \\
+\frac{1}{4r^2}{\d_{\g}}^\a\d_{\b\mu}\th^\g\wedge\th^{\mu}+\frac{1}{4r^2}{\d_{\ol{\g}}}^\a\d_{\b\ol{\mu}}\th^{\ol{\g}}\wedge\th^{\ol{\mu}}.
\end{multline*}
Therefore we have 
\begin{equation}\label{curvature_at_p}
{{R _{\b}}^{\a}}_{\g\ol{\mu}}=\frac{1}{4r^2}\Bigl(\d_{\g\ol{\mu}}{\d_{\b}}^\a+\d_{\b\ol{\mu}}{\d_{\g}}^\a-\d_{\b\g}{\d_{\ol{\mu}}}^\a\Bigr),\ \  
{\rm Ric}_{\g\ol{\mu}}=\frac{1}{2r^2}\d_{\g\ol{\mu}},\ \ 
\scal=\frac{1}{r^2}.
\end{equation}
The second equation implies that $\th$ is pseudo-Einstein. Using (\ref{five-dim-mu}), (\ref{torsion_at_p}) and (\ref{curvature_at_p}) together with the fact that $\th\wedge(d\th)^2=16r\cdot vol_{\pa\Omega^\prime_r}$, we obtain
\begin{equation*}
\mu(\pa\Omega_r)=\mu(\pa\Omega^\prime_r)=-\frac{20\pi}{27}\frac{1}{r^3}.
\end{equation*}
Now we recall the following theorem of Fefferman:
\begin{thm}[\cite{Fe1}]
Let $\Omega$ and $\Omega^\prime$ be strictly pseudoconvex domains. If $\Omega$ and $\Omega^\prime$ are biholomorphic, then 
$\pa\Omega$ and $\pa\Omega^\prime$ are CR diffeomorphic.
\end{thm}
Since $\mu(\pa\Omega_r)$ is a CR invariant, it follows that $\Omega_r$ and $\Omega_{r^\prime}$ are biholomorphic if and only if 
$r=r^\prime$.
\end{proof}

\end{document}